\newtheorem{teo}{Theorem}[section]
\newtheorem{defin}[teo]{Definition}
\newtheorem{remark}[teo]{Remark}
\newtheorem{lemma}[teo]{Lemma}
\newcommand\R{\mathbb{R}}
\def\elle#1{L^{#1}(\Omega)}
\def\div{{\rm div}}
\def\elle#1{L^{#1}(\Omega)}
\def\w{H_0^{1}(\Omega)}
\def\io{\int_{\Omega}}
\def\norma#1#2{\|#1\|_{\lower 4pt \hbox{$\scriptstyle #2$}}}
\def\D{\nabla}
\def\finedim
\def\gw{G_{\tilde{k}}(w_n)}
\def\R{I \!\!R}
\def\elle#1{L^{#1}(\Omega)}
\def\w{W_0^{1,2}(\Omega)}
\def\w1{W_0^{1,1}(\Omega)}
\def\eps{\varepsilon}
\def\dys{\displaystyle}
\def\w{H_0^{1}(\Omega)}
\def\be{\begin{equation}}
\def\ee{\end{equation}}
\def\vn{u_n}
 \numberwithin{equation}{section}
\title[]{ A singular Schr\"odinger-Maxwell system}
\author{Lucio Boccardo}
\address{Istituto Lombardo \& Sapienza Università di Roma, Italy.}
\email{boccardo@mat.uniroma1.it}
\author{Stefano Buccheri}
\address{Faculty of Mathematics - University of Vienna
\hfill\break \indent Oskar-Morgenstern-Platz 1, 1090 Vienna, Austria.}
\email{stefano.buccheri@univie.ac.at}
\author{Carlos Alberto dos Santos}
\address{Departamento de Matem\'atica - Instituto de Ci\^{e}ncias Exatas - Universidade de Bras\'{i}lia
\hfill\break \indent Campus Universit\'{a}rio Darcy Ribeiro, 70910-900, Bras\'{i}lia - DF - Brazil.}
\email{c.a.p.santos@mat.unb.br}
\keywords{Schr\"odinger-Maxwell system, singular nonlinearities } \subjclass[2010]{35J25, 35J60}
\date{\today}
\begin{document}

\maketitle 

\begin{abstract}
In this paper we are concerned with existence of positive solutions for a Schr\"odinger-Maxwell system with singular or strongly-singular terms.  We overcome the difficulty given by the singular terms through an approximation scheme and controlling the approximated sequences of solutions with suitable barriers from above and from below.  Besides this, in some particular case, we show that the unique energy solution of the singular system is a saddle point of a suitable functional.
\end{abstract}

\tableofcontents
\section{Introduction}
In this paper we will focus on the following system with singular nonlinearity

\begin{equation} \label{sing}
\dys
\begin{cases}
-\div(A(x)u)+v u^{r-1}=\dys \frac{1}{u^{\gamma}} \qquad & \mbox{in } \Omega\,,\\
-\div(M(x)v) = u^r \qquad & \mbox{in } \Omega\,,\\
u,v>0 \qquad & \mbox{in } \Omega\,,\\
u = \varphi =0 & \mbox{on }  \partial \Omega\,,
\end{cases}
\end{equation}
where $\Omega$ is an open bounded set of $\mathbb{R}^N$, with $N \geq 2$, $r,\gamma>0$ are suitable constants, and the measurable matrices $A(x), M(x)$ are bounded and elliptic in the sense that 
\begin{equation}\label{alpha}
\alpha|\xi|^2\le A(x)\, \xi\,\xi \le \beta|\xi|^2 \ \ \ \mbox{and} \ \ \ \alpha|\xi|^2\le M(x)\, \xi\,\xi  \le\beta|\xi|^2 \,,
\end{equation}
for almost every $x$ in $\Omega$, and for every $\xi$ in $\R^{N}$, with $0 < \alpha \leq \beta$.\\

To place our work within the existing literature, we will briefly recall some known facts on Schr\"odinger-Maxwell systems and singular equations. Being impossible to give a detailed survey on these two wide themes, we just recall those results that are related to our work.
Let us start with the seminal work \cite{bf}, where Benci and Fortunato considered the following system
\begin{equation} \label{sing1}
\dys
\begin{cases}
-\frac{1}{2} \Delta u+v u=\dys \omega u \qquad & \mbox{in } \mathbb{R}^N\,,\\
-\Delta v= 4 \pi u^2 \qquad & \mbox{in }  \mathbb{R}^N,
\end{cases}
\end{equation}
that comes from the study of the eigenvalue problem for the Schr\"odinger operator when coupled with an electromagnetic field.
Using the fact that solutions of \eqref{sing1} can be characterized as critical point of a suitable functional (not bounded neither from below nor from above), the authors prove that \eqref{sing1} admits an increasing and divergent sequence of eigenvalues $\{\omega_n\}$.\\
More recently, taking inspiration from the structure of \eqref{sing1}, a series of papers (see for instance \cite{bc,bo,du}) studied the existence and the regularizing effect of the problem
\begin{equation} \label{sing0}
\dys
\begin{cases}
-\div(A(x)u)+Bv |u|^{r-2}u=f(x)\qquad & \mbox{in } \Omega\,,\\
-\div(M(x)v) = |u|^r \qquad & \mbox{in } \Omega\,,\\
u = \varphi =0 & \mbox{on }  \partial \Omega\,,
\end{cases}
\end{equation}
where $B>0$, $f \in L^m(\Omega)$ with $m>1$, and $A(x),M(x)$ satisfies \eqref{alpha}. One of the main feature of \eqref{sing0} is that the interplay of the two equation enhances the regularizing effect of the system with respect of the one of the single equation. The main techniques used in \cite{bc,bo,du} are approximation scheme, a priori estimates through a test-function-based approach and fixed point theorems. These tools can be used for more general system that do not necessarily have a variational structure.\\

On the other hand, the literature about singular equation is wide and well established. Without the intention of being exhaustive, we mention the seminal works \cite{bo,1, 2}. We stress that in the previously mentioned paper the singularity is of reaction type, meaning something like
\[
-\mbox{div}(A(x)u)=\displaystyle h(u) \ \ \ \mbox{with} \ \ \ h(u)\approx\frac{1}{|u|^{\gamma}} \ \ \ \mbox{near the origin}. 
\]
It is well known that if $\gamma$ is small enough ($\gamma\in(0,1]$ or $\gamma\in(0,3)$ plus some additional smoothness assumption on the data) such type of equation admits a solution in $W^{1,2}_0(\Omega)$. On the other hand, if $\gamma\ge 3$, solutions still exists but they belong just to $W^{1,2}_{loc}(\Omega)$. For more information on this topic we refer to the introduction of \cite{OP} and reference therein.\\
We briefly mention that if the singularity is of absorption type, namely something like
\[
-\mbox{div}(A(x)u)+\displaystyle h(u)=1 \ \ \ \mbox{with} \ \ \ h(u)\approx\frac{1}{|u|^{\gamma}} \ \ \ \mbox{near the origin},
\]
the features of the equation change dramatically. We address the interested reader to \cite{choi,dav,diazbis}.\\

Problems of the type of \eqref{sing0}, with the source term $f(x)$ replaced with the nonlinearity  $u^{-\gamma}$, have not been considered on the literature that much. Some related results can be found in  \cite{cgh,yc,zq,zq1}, in general for weak singularities ($\gamma\in(0,1)$ with the exception of \cite{zq1}), and based on a variational framework. Other results for other type of system with singular nonlinearities can be found for instance in  \cite{arruda,cav,zhang}.\\

Motivated by the above context, the main aim of this work is to study the singular Schr\"odinger-Maxwell system \eqref{sing}, including the non variational case and the strongly singular one. More in detail we provide different regimes for the parameters $r$ and $\gamma$ in order to have a existence of a solution for \eqref{sing}.\\

When $\gamma\in (0,1)$, or $\gamma\in (0,3)$ and some additional smoothness assumptions are made on $\partial\Omega$ and $A(x)$, and $r>\max\{0,1-\gamma\}$ we provide existence of an {\it energy solution} for our system (see Definition \ref{energysol} below). We stress that when $\gamma\in(0,1)$ even the nonlinear term in the left hand side of the first equation in \eqref{sing} can be singular and so we have a double singularity system (this is something that has not been considered in the existing literature). Anyway, this singularity is mild compared with the one on the right hand side. In broad terms, due to the assumption on  $r$, our singularity is always of reaction type.\\

If $\gamma$ is large, we do not expect existence of energy solutions, and then we introduce a suitable notion of {\it distributional solution} for \eqref{sing} (see Definition \ref{15}) and prove existence. We also need an alternative way to proscribe the zero boundary condition to the solution of the singular equation.\\

In some particular case, even if our approach to prove existence is not variational, we are able to show that the obtained energy solution is a saddle point of a suitable functional unbounded from above and from below. This somehow reflect the structure of the original problem \eqref{sing1}. Moreover we provide a partial uniqueness result.\\

Let us focus at first on the case $\gamma\in(0,1]$.
\begin{defin}\label{energysol}
A couple of functions $(u,v)\in \left(W^{1,2}_0(\Omega)\cap \elle{\infty}\right)^2$ is a energy solution to system \eqref{sing} if 
\[
u,v> 0 \ \ \  \mbox{ in } \ \Omega,
\]
\[
\frac{\phi}{u^{\gamma}}\in \elle1 \ \ \ \forall \ \phi\in W^{1,2}_0(\Omega),
\]
and hold
\begin{equation}\label{energysoleq}
\begin{cases} 
\medskip
\medskip

\displaystyle \io A(x)\nabla u\nabla\phi+\io v u^{r-1} \phi=\io\frac{\phi}{u^{\gamma}} & \ \ \ \forall \ \phi\in W^{1,2}_0(\Omega) \\

\medskip

\displaystyle  \io  M(x)\nabla v \nabla \psi=\io u^{r}\psi & \ \ \ \forall \ \psi\in W^{1,2}_0(\Omega) .
\end{cases}
\end{equation}
\end{defin}

\begin{teo} \label{existence1}
Let us assume \eqref{alpha} and take $\gamma\in(0,1]$ and $r>1-\gamma$. there exists $(u,v)\in \left(W^{1,2}_0(\Omega)\cap L^{\infty}(\Omega)\right)^2$ energy solution to \eqref{sing}. Moreover:
\begin{enumerate}
\item[$i$)] if $r\geq 1$ such a solution is unique,
\item[$ii$)] if $0<\gamma <1$ and $r\geq 1$, the unique solution $(u,v)$  is a saddle point to the functional.
\end{enumerate}
\[
J(w,z)=\frac12\int_{\Omega} A(x)\nabla w\nabla w-\frac1{2r}\int_{\Omega}M (x)\nabla z\nabla z
+\frac1{r}\int_{\Omega}  z^+ |w|^r
-\frac1{1-\gamma} \int_{\Omega} \bigl(w^{+} \bigr)^{1-\gamma}
\]
that is, 
\begin{equation}\label{16}
J(u,z)\le J(u,v) \le J(w,v) \ \mbox{ for any } \ (w,z)\in \left(W^{1,2}_0(\Omega)\right)^2.
\end{equation}
\end{teo}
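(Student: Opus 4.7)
The plan divides into three parts matching the three assertions.

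For \emph{existence}, I would use a standard approximation scheme. Replace $u^{-\gamma}$ by the bounded nonlinearity $(u+\tfrac1n)^{-\gamma}$ and (since $r$ may lie in $(1-\gamma,1)$) replace the coupling $vu^{r-1}$ by $v_n(u_n^++\tfrac1n)^{r-1}$. For this regularized system a pair $(u_n,v_n)\in(W^{1,2}_0(\Omega)\cap L^\infty(\Omega))^2$ is produced by a Schauder fixed-point argument on the map $w\mapsto u_n$, where $v_n$ is first determined from the linear problem $-\mathrm{div}(M\nabla v_n)=(w^+)^r$ and then the first equation is solved for $u_n$ with $v_n$ frozen. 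Using $u_n$ as test function in the first equation and discarding the nonnegative coupling term yields
\[
\alpha\int_\Omega|\nabla u_n|^2\le \int_\Omega \frac{u_n}{(u_n+\tfrac1n)^\gamma}\le \io u_n^{1-\gamma}\le C\|u_n\|_{W^{1,2}_0}^{1-\gamma},
\]
which bounds $\|u_n\|_{W^{1,2}_0}$ since $1-\gamma<2$. Uniform $L^\infty$ bounds follow by Stampacchia truncation on both equations; a pointwise positive lower bound on compact subsets $\omega\Subset\Omega$, crucial to pass to the limit in $u_n^{-\gamma}$, is obtained by comparison with a solution of $-\mathrm{div}(A\nabla z_\omega)=c\chi_\omega$, the hypothesis $r>1-\gamma$ ensuring such a constant $c>0$ uniform in $n$. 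Passage to the limit then uses Rellich compactness, a.e.\ convergence, and a Boccardo--Murat type argument for strong convergence of gradients.

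For the \emph{saddle-point property} (ii), under $0<\gamma<1$ and $r\ge 1$, the two inequalities in \eqref{16} are treated separately. For $z\mapsto J(u,z)$, the $z$-dependent part $-\frac{1}{2r}\int_\Omega M(x)\nabla z\nabla z+\frac{1}{r}\int_\Omega z^+ u^r$ is strictly concave and weakly upper semicontinuous on $W^{1,2}_0(\Omega)$; since $v>0$, the map $z\mapsto z^+$ is smooth at $v$, and computing the Gateaux derivative shows that $v$ is a critical point iff $-\mathrm{div}(M\nabla v)=u^r$, which is the second equation of \eqref{energysoleq}. Strict concavity then yields $J(u,z)\le J(u,v)$ for every $z$. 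For $w\mapsto J(w,v)$, I first note that $J(|w|,v)\le J(w,v)$: the gradient and $|w|^r$ terms are unchanged while $-(|w|^+)^{1-\gamma}=-|w|^{1-\gamma}\le -(w^+)^{1-\gamma}$ pointwise. On the convex cone $\{w\ge 0\}\subset W^{1,2}_0(\Omega)$ the functional $J(\cdot,v)$ is strictly convex: the pointwise integrand $\tfrac{v}{r}w^r-\tfrac{1}{1-\gamma}w^{1-\gamma}$ has second derivative $v(r-1)w^{r-2}+\gamma w^{-\gamma-1}>0$ for $r\ge 1$ and $\gamma\in(0,1)$, and the Dirichlet term is strictly convex. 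Since $u>0$ solves the Euler equation (i.e.\ the first line of \eqref{energysoleq}), $u$ is the unique minimizer on the cone, whence $J(u,v)\le J(|w|,v)\le J(w,v)$ for every $w\in W^{1,2}_0(\Omega)$.

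The \emph{uniqueness} statement (i) then follows: any two energy solutions are critical points of $J$, hence saddle points by the above, and the strict convex--concave structure forces saddle points to coincide. For the endpoint $\gamma=1$ (not covered by (ii)), the same scheme applies after replacing $-\tfrac{1}{1-\gamma}w^{1-\gamma}$ by $-\log w$, still convex on $(0,\infty)$, the log integrability being handled by a truncation since $u\in L^\infty$ is bounded below on compact subsets. The main obstacle I expect is the existence step, specifically producing the uniform positive lower bound on $u_n$ on compact subdomains: without it one cannot pass to the limit in the singular term $u_n^{-\gamma}$, and the whole scheme collapses; once the lower bound is in place, the remaining steps (strong convergence of gradients, identification of the limit as an energy solution, convexity/concavity arguments) are either classical or follow directly from the structural properties already identified.
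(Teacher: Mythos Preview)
Your existence and saddle-point arguments are essentially the paper's. For existence you approximate the whole system at once and pass to the limit in $n$, whereas the paper first solves the singular single equation for frozen $v$ (passing $n\to\infty$ there) and only then runs Schauder; both orderings work, and in fact your ordering is exactly what the paper uses later for the distributional case. Your lower-bound step is stated a bit loosely (a linear comparison with $-\mathrm{div}(A\nabla z_\omega)=c\chi_\omega$ does not directly give a subsolution since the right-hand side of the approximated equation may change sign), but the paper's fix is straightforward: compare instead with the unique solution of the nonlinear problem $-\mathrm{div}(A\nabla w)=(1+w)^{-\gamma}(1-Bw^{\gamma+r-1})$, which is a subsolution and is strictly positive by the strong maximum principle. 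For the saddle inequality you use $|w|$ where the paper uses $w^+$, but the convexity argument is identical.

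The genuine gap is in your uniqueness argument at the endpoint $\gamma=1$. Replacing $-\tfrac{1}{1-\gamma}w^{1-\gamma}$ by $-\log w$ does not yield a workable functional on $W^{1,2}_0(\Omega)$: since $u\in W^{1,2}_0(\Omega)$ vanishes at $\partial\Omega$ and Theorem~\ref{existence1} makes no smoothness assumption on $\partial\Omega$ or $A$, there is no control on the boundary decay of $u$, and $\int_\Omega\log u$ may well be $-\infty$. Your ``truncation since $u$ is bounded below on compact subsets'' does not help, because the divergence comes from a neighbourhood of $\partial\Omega$, not from the interior; truncating $u$ from below or shrinking the domain destroys the saddle structure you need. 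The paper sidesteps the functional entirely for uniqueness: it tests the two first equations with $u-\tilde u$, tests the two second equations with $v-\tilde v$, and links the cross terms via the elementary inequality
\[
\big(vu^{r-1}-\tilde v\tilde u^{r-1}\big)(u-\tilde u)\ \ge\ \tfrac1r\,(u^r-\tilde u^r)(v-\tilde v),
\]
which follows from Young's inequality $a^{r-1}b\le\tfrac{r-1}{r}a^r+\tfrac1r b^r$ and needs only $r\ge1$. This direct PDE argument covers all $\gamma>0$ (in particular $\gamma=1$) without any reference to $J$, and is what you should use for part~(i).
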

\noindent The first step to prove Theorem \ref{existence1} is to focus on the single equation
\[
-\div(A(x)u)+v u^{r-1}=\dys \frac{1}{u^{\gamma}}
\]
with a {\it frozen} potential $v$. One important feature of such an equation is that it enjoys a comparison principle for solutions with small $L^{\infty}-$norm.
Then, through an approximation procedure, we prove existence and uniqueness of a solution $u$ that satisfies some a priory estimates that do not depend on $v$. This latter property will allow us to apply the Schauder Fixed point Theorem to the whole system and obtain existence.
If $r\ge1$, we are able to generalize the argument of \cite{cgh} to prove that the solution is unique. Unfortunately we are not able to provide uniqueness in  the case $r\in(0,1)$. As far as the functional is concerned, note that if $(u,v)$ is an energy solution of the system then $J(u,v)$ is finite and the chain of inequality \eqref{16} is well defined. It is easy to verify (just take $w\in W^{1,2}_0(\Omega)\cap \elle{\infty}$ and evaluate  $J(tw,v)$ and $J(u,tw)$ as the real parameter $t$ diverges) that the functional is unbounded from both above and below.\\

The same approach (for existence and uniqueness) can be applied in the case $\gamma\in(1,3)$, paying the price, however, of more regularity on both $\partial \Omega$ and the matrix $A(x)$. More in detail we shall additionally assume that
\be\label{30.08}
\partial\Omega\in C^{2,1} \ \ \ \mbox{and} \ \ \ A_{i,j}\in C^{0,1}(\overline{\Omega}) \ \ \mbox{for} \ \ i,j=1,\cdots, N
\ee
We have the following result.
\begin{teo} \label{30.08x}
Let us assume \eqref{alpha} and \eqref{30.08}. Given $\gamma\in(1,3)$ and $r>0$, there exists $(u,v)\in \left(W^{1,2}_0(\Omega)\cap L(\Omega)^{\infty}\right)^2$ energy solution to \eqref{sing} such that
\be\label{sun}
u(x)\ge c d(x)^{{2}/{(\gamma+1)}} \ \ \ \mbox{in} \ \ \ \Omega
\ee
for some $c>0$, where $d(x)$ stands for the distance of $x \in \Omega$ to the boundary of $\Omega$. Moreover if $r\ge1$ the solution is unique.
\end{teo}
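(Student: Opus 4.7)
The plan is to follow the approach of Theorem \ref{existence1}: for a frozen potential $v$, build by approximation a positive solution $u$ of the first equation with estimates independent of $v$, and then close a Schauder fixed-point loop with the Maxwell equation. The new difficulty in the strongly singular range $\gamma\in(1,3)$ is that $u^{-\gamma}$ is no longer a priori controlled by a generic $W^{1,2}_0$ test function, so the energy estimate itself requires a pointwise lower bound on $u$; this is obtained by constructing a distance-function lower barrier of the form $c\,d(x)^{2/(\gamma+1)}$, and it is precisely for such a barrier that the extra regularity \eqref{30.08} is needed.

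Concretely, for fixed $0\le v\in L^\infty(\Omega)$ I would consider the approximate problem
\[
-\div(A(x)\nabla u_n) + v\,(u_n+1/n)^{r-1} = \frac{1}{(u_n+1/n)^\gamma}\quad\mbox{in }\Omega,\qquad u_n=0\mbox{ on }\partial\Omega,
\]
which admits a positive solution $u_n\in W^{1,2}_0(\Omega)\cap L^\infty(\Omega)$, with a uniform $L^\infty$ bound via $G_k(u_n)$ as test function (as in the proof of Theorem \ref{existence1}). Next, because $\partial\Omega\in C^{2,1}$ and $A_{ij}\in C^{0,1}(\overline{\Omega})$, the distance function $d(x)$ is $C^{2,1}$ in a tubular neighborhood of $\partial\Omega$, and a direct computation yields, near the boundary,
\[
-\div\bigl(A(x)\nabla\bigl(c\,d^{2/(\gamma+1)}\bigr)\bigr)\;\sim\; c\,d^{-2\gamma/(\gamma+1)},
\]
which matches the scaling of $\bigl(c\,d^{2/(\gamma+1)}\bigr)^{-\gamma}$ up to a factor $c^{\gamma+1}$. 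Choosing $c$ small, $\underline{u}(x):=c\,d(x)^{2/(\gamma+1)}$ is a subsolution of the approximate equation both in a tubular neighborhood of $\partial\Omega$ and (trivially, by smallness of $c$) in the interior; comparison then yields $u_n\ge \underline{u}$ uniformly in $n$.

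With the lower barrier at hand, testing the approximate equation with $u_n$ itself gives
\[
\alpha\int_\Omega |\nabla u_n|^2 \;\le\; \int_\Omega \frac{u_n}{(u_n+1/n)^\gamma}\;\le\; \int_\Omega u_n^{1-\gamma} \;\le\; C\int_\Omega d(x)^{-2(\gamma-1)/(\gamma+1)}\,dx,
\]
and the last integral is finite precisely because $2(\gamma-1)/(\gamma+1)<1$ iff $\gamma<3$; this is the role of the upper bound on $\gamma$. Passage to the limit (weak $W^{1,2}_0$-convergence from the energy estimate, $L^1$-convergence of $u_n^{-\gamma}\to u^{-\gamma}$ by dominated convergence using $\underline{u}\le u_n$, and strong $W^{1,2}_0$-convergence by lower semicontinuity combined with the convergence of energies) delivers the solution map $v\mapsto u=:S(v)$, whose output satisfies \eqref{sun}. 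Composition with the Maxwell solver and Schauder's theorem on a suitable invariant ball of $L^\infty\cap W^{1,2}_0$ then produce the pair $(u,v)$. For uniqueness when $r\ge 1$, the lower barrier \eqref{sun} makes $u^{-\gamma}$ a bona fide element of $W^{-1,2}(\Omega)$, so the two-solution comparison used in the proof of Theorem \ref{existence1} (adapted from \cite{cgh}) applies verbatim.

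The main obstacle is the construction of the lower barrier: the boundary computation involves second derivatives of $d$ and Lipschitz control on $A$, and one has to glue the boundary subsolution with a (trivial) interior one while keeping the constants independent of both $n$ and $v$ (over a bounded range), so that the same barrier serves the entire approximating sequence and the whole fixed-point iteration. Once this is established, the remaining steps --- $L^\infty$ and $W^{1,2}_0$ estimates, passage to the limit, Schauder loop, and uniqueness --- are direct adaptations of their counterparts in the $\gamma\le 1$ case.
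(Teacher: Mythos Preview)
Your proposal is correct and follows the same overall architecture as the paper: barrier of the form $c\,d^{2/(\gamma+1)}$, uniform $W^{1,2}_0$ bound via that barrier, passage to the limit for the frozen equation, and Schauder's loop; uniqueness for $r\ge 1$ is handled identically.

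Two technical choices differ and are worth noting. First, the paper builds the barrier not from $d$ directly but from the first eigenfunction $\varphi_1$ of $-\div(A\nabla\cdot)$ (Lemma~\ref{regular}), taking $w_n=c_3\varphi_1^{2/(\gamma+1)}-1/n$; since $\varphi_1$ satisfies a clean global equation and $\varphi_1\sim d$ under \eqref{30.08}, this sidesteps the boundary/interior gluing you anticipate. Second, for the energy estimate (Lemma~\ref{08.7}) the paper routes through Hardy's inequality,
\[
\int_\Omega \frac{u_n}{(1/n+u_n)^\gamma}\le C\int_\Omega \frac{u_n}{d^{\tau\gamma}}\le C\Bigl(\int_\Omega \frac{u_n^2}{d^2}\Bigr)^{1/2}\Bigl(\int_\Omega d^{\,2-2\tau\gamma}\Bigr)^{1/2},
\]
whereas your bound $\int u_n/(1/n+u_n)^\gamma\le \int (1/n+u_n)^{1-\gamma}\le C\int d^{-2(\gamma-1)/(\gamma+1)}$ is more elementary and lands on exactly the same integrability condition $2(\gamma-1)/(\gamma+1)<1\iff \gamma<3$. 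Both routes are valid; yours avoids Hardy, the paper's avoids the explicit gluing.
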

\noindent The additional assumption \eqref{30.08} allows to prove the pointwise estimate from below \eqref{sun}, using a suitable subsolution argument (here again the comparison principle plays its role). Such estimate enables us to take advantage of the Hardy Inequality
\[
\mathcal{H}\io \left(\frac{\phi}{d(x)}\right)^2\le \io|\nabla \phi|^2 \ \ \ \forall \ \ \phi\in W^{1,2}_0(\Omega),
\]
and prove the required energy a priory estimate for the sequence of approximating solutions.\\

On the other hand, if $\gamma\ge 3$ we do not expect to have finite energy solutions, independently on the smoothness of the data, and a more general notion of distributional solution is needed.

\begin{defin}\label{15} We say that the system \eqref{sing} admits a distributional solution if there exsit $u\in W_{loc}^{1,2}(\Omega)\cap\elle{\infty}$ and $v\in W_0^{1,2}(\Omega)\cap\elle{\infty}$ such that
\[
u,v> 0 \ \ \ a.e. \mbox{ in } \ \Omega,
\]
\[
(u-\epsilon)^+ \in W^{1,2}_0(\Omega) \ \ \mbox{ for any } \ \epsilon>0,
\]
\[
\frac{\phi}{u^{\gamma}}\in \elle1 \ \ \ \forall \ \phi\in C^1_c(\Omega),
\]
and 
\[
\begin{cases} 
\medskip
\medskip
\displaystyle \io A(x)\nabla u\nabla\phi+\io v u^{r-1} \phi=\io\frac{\phi}{u^{\gamma}} \\
\medskip
\displaystyle  \io M(x)\nabla v \nabla \psi=\io u^r\psi
\end{cases}
\ \ \ \forall \ (\phi, \psi)\in C^{1}_{c}(\Omega) \times W^{1,2}_0(\Omega).
\]
\end{defin}
So, we have the following result.
\begin{teo}\label{teo15}
Let us assume \eqref{alpha} and \eqref{30.08}. 
Assume that $\gamma\geq 1$ and $r>0$. Then there exists a couple of positive functions $(u,v)\in \big(W^{1,2}_{loc}(\Omega)\cap L^{\infty}(\Omega)\big) \times \big(W^{1,2}_{0}(\Omega)\cap L^{\infty}(\Omega)\big)$ that solves \eqref{sing} in the sense of definition \ref{15}. 
\end{teo}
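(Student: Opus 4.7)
My plan is to follow the approximation scheme used for Theorems \ref{existence1} and \ref{30.08x}, adapted to the weaker notion of solution given in Definition \ref{15}; the new key ingredient is a uniform pointwise lower barrier that survives in the limit even when $\gamma\ge 3$. Consider the regularized system
\begin{equation*}
\begin{cases}
-\div(A(x)\D u_n) + v_n u_n^{r-1} = \dfrac{1}{(u_n + 1/n)^{\gamma}} & \mbox{in } \Omega, \\
-\div(M(x) \D v_n) = u_n^{r} & \mbox{in } \Omega, \\
u_n = v_n = 0 & \mbox{on } \partial\Omega,
\end{cases}
\end{equation*}
whose positive solutions $(u_n,v_n)\in(\w\cap\lio)^2$ are produced by a Schauder fixed point argument exactly as in the proof of Theorem \ref{existence1}, the right-hand side being bounded for each fixed $n$.

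Next, uniform $n$-independent a priori bounds must be derived. A uniform $\lio$-bound for $u_n$ is obtained by comparison with a supersolution of the purely singular problem $-\div(A(x)\D \bar u)=\bar u^{-\gamma}$, which is bounded and vanishes on $\partial\Omega$; the additional nonnegative term $v_n u_n^{r-1}$ is compatible with the comparison. Consequently $u_n^r$ is uniformly bounded, testing the second equation with $v_n$ yields uniform $\w$-bounds on $v_n$, and Stampacchia regularity provides $\|v_n\|_{\lio}\le C$.

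The heart of the proof is the uniform pointwise lower bound. Following the barrier argument behind estimate \eqref{sun} of Theorem \ref{30.08x}---which exploits \eqref{30.08} to ensure that the distance function $d(\cdot)$ is of class $C^{2,1}$ near $\partial\Omega$ and builds the explicit subsolution $\underline u(x):=c\,d(x)^{2/(\gamma+1)}$---one shows that
\begin{equation*}
u_n(x)\ge c\, d(x)^{2/(\gamma+1)}\qquad\mbox{in } \Omega,
\end{equation*}
with $c$ independent of $n$. This is the key new estimate: it makes $1/(u_n+1/n)^{\gamma}$ locally uniformly bounded in $n$, and in the limit it forces $\{u>\eps\}$ to lie at positive distance from $\partial\Omega$, so that $(u-\eps)^{+}$ has compact support in $\Omega$ for every $\eps>0$. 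The main technical obstacle is to absorb the cross term $v_n\underline u^{r-1}$ into the singular right-hand side uniformly in $n$, which is done using the previously derived $\lio$-bound on $v_n$ together with the fact that $\underline u^{-\gamma}$ dominates $\underline u^{r-1}$ near the boundary, via the strict monotonicity of $\xi\mapsto \xi^{-\gamma}$.

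Once the barrier is in place, the conclusion is routine. Testing the first equation with $u_n\xi^2$ for a cutoff $\xi\in C^1_c(\Omega)$ yields uniform $W^{1,2}_{loc}$ bounds on $u_n$. Up to a subsequence, $u_n\rightharpoonup u$ in $W^{1,2}_{loc}(\Omega)$ and $v_n\rightharpoonup v$ in $\w$, both weakly-$*$ in $\lio$, with a.e. convergence by Rellich and the inherited lower bound $u\ge c\,d^{\,2/(\gamma+1)}$. Strong convergence of $\D u_n$ in $L^2_{loc}$, needed to pass to the limit in the cross term $v_n u_n^{r-1}\phi$ and to identify the diffusion, follows from a standard local compactness argument analogous to the one used for Theorem \ref{existence1}. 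Passing to the limit in the singular term is then secured by dominated convergence: on $\mbox{supp}(\phi)$ one has $u_n\ge c\,\delta^{2/(\gamma+1)}$ with $\delta=\mbox{dist}(\mbox{supp}(\phi),\partial\Omega)>0$, so $\phi/(u_n+1/n)^{\gamma}$ is uniformly bounded. The second equation passes to the limit immediately. Hence $(u,v)$ verifies Definition \ref{15}, the condition $(u-\eps)^{+}\in\w$ following from the compact support of $(u-\eps)^{+}$ combined with the local $W^{1,2}$ regularity of $u$.
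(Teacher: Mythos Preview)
Your overall scheme---approximate, get uniform $L^\infty$ and local $W^{1,2}$ bounds, pass to the limit---is the paper's scheme, and most of your steps are sound. There is, however, a genuine gap in your verification of the boundary condition $(u-\eps)^+\in W^{1,2}_0(\Omega)$.

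You write that the lower barrier $u_n\ge c\,d^{2/(\gamma+1)}$ ``forces $\{u>\eps\}$ to lie at positive distance from $\partial\Omega$, so that $(u-\eps)^+$ has compact support''. This is the wrong direction: a pointwise \emph{lower} bound on $u$ tells you where $u$ is \emph{large}, not where it is small. From $u\ge c\,d^{2/(\gamma+1)}$ you get $\{d>(\eps/c)^{(\gamma+1)/2}\}\subset\{u>\eps\}$, i.e.\ the superlevel set contains a fixed interior region, but nothing prevents it from touching $\partial\Omega$. To get compact support of $(u-\eps)^+$ you would need an \emph{upper} barrier of the form $u\le C\,d^\beta$; you do not establish one, and the supersolution $\bar u$ you invoke for the $L^\infty$ bound is only asserted to be bounded and to vanish on $\partial\Omega$, with no quantitative decay rate. (Such an upper estimate can be proved under \eqref{30.08}, but it is an additional nontrivial step, not a consequence of what you have written.)

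The paper bypasses this issue entirely: it never claims compact support. Instead it takes $(u_n-\eps)^+$ itself as a test function in the first approximate equation, obtaining
\[
\alpha\io|\nabla(u_n-\eps)^+|^2\le\int_{\{u_n>\eps\}}\frac{u_n-\eps}{(u_n+1/n)^\gamma}\le \frac{C_0|\Omega|}{\eps^\gamma},
\]
which is uniform in $n$; weak compactness in $W^{1,2}_0(\Omega)$ together with a.e.\ convergence then gives $(u-\eps)^+\in W^{1,2}_0(\Omega)$. This argument uses only the crude local lower bound of Lemma~\ref{16:51} (not the sharp barrier of Lemma~\ref{regular}) and in fact does not need \eqref{30.08} at all. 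A minor secondary remark: strong $L^2_{loc}$ convergence of $\nabla u_n$ is not needed to pass to the limit---weak convergence handles $\io A\nabla u_n\nabla\phi$, and dominated convergence (via the uniform $L^\infty$ bounds and a.e.\ convergence of $u_n,v_n$) handles both $\io v_n u_n^{r-1}\phi$ and the singular term.
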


\noindent To prove Theorem \ref{teo15} do not start solving the single singular equation. Instead, always through a fixed point theorem, we built a sequence of approximating solution $(u_n,v_n)$ for a suitable regularized system. Then we obtain apriori estimates (local for $u_n$ and global for $v_n$) and finally pass to the limit.\\

This paper is organized as follows. In Section 2, we show existence of solutions,  positiveness   and a comparison result for a single singular modified problem, while in Section 3, we prove existence of energy solutions to problem \eqref{sing} with $0<\gamma <3$. In section 4, we show that the unique solution of \eqref{sing} is a saddle point when  $0<\gamma <1$, while in Section 5, we show existence of solutions outside the energy space.

\section{Preliminary results}

In this section, we present some auxiliary results related to the problem \eqref{sing}. We begin by showing existence of a sequence of solutions, uniformly bounded above by an appropriate power of the $L^{\infty}-$norm of the potential term $v$.

\begin{lemma}\label{16:09}Let $\Omega$ be a bounded open set of $\mathbb{R}^N$ and $A$ a matrix that satisfies \eqref{alpha}. Let us assume $\gamma>0$, $r>1-\gamma $ and $0\le v \in\elle{\infty}$. Then, for any $n\in \mathbb N$, there exists $0\le u_n\in  W^{1,2}_0(\Omega)\cap\elle{\infty}$ such that
\begin{equation}\label{15:56}
\io A(x)\nabla u_n \nabla \phi=\io \frac{1}{(\frac1n+u_n)^{\gamma}}(1- v  u_n^{\gamma+r-1})\phi, \ \ \ \ \  \forall\, \phi\in W^{1,2}_0(\Omega).
\end{equation}
Moreover there exists $C_0=C_0(\alpha,N,|\Omega|)$ such that
\begin{equation}\label{14;25}
\|u_n\|_{\elle{\infty}}\le \min\{\| v \|_{\elle{\infty}}^{\frac1{1-r-\gamma}},C_0\}.
\end{equation}
\end{lemma}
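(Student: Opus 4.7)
The plan is to produce $u_n$ via Schauder's fixed point theorem and then extract the $L^{\infty}$ bound by Stampacchia's truncation method. For fixed $n$, I would define $T_n\colon L^{2}(\Omega)\to L^{2}(\Omega)$ by letting $T_n(w)$ be the unique $W^{1,2}_{0}(\Omega)$-solution of
\[
-\mathrm{div}(A(x)\nabla T_n(w)) = F_n(w),\qquad F_n(w):=\frac{1 - v\,(w^{+})^{r+\gamma-1}}{(1/n + w^{+})^{\gamma}}.
\]
For each $w$, $F_n(w)\in L^{\infty}(\Omega)$ because the regularisation $1/n$ caps the singular denominator and $v$ is bounded, so $T_n(w)\in W^{1,2}_{0}(\Omega)\cap L^{\infty}(\Omega)$ by classical $L^{\infty}$ elliptic regularity. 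Continuity of $T_n\colon L^{2}\to L^{2}$ follows from dominated convergence applied to $F_n$ along a.e.\ convergent subsequences, and compactness from the compact embedding $W^{1,2}_{0}\hookrightarrow L^{2}$. A uniform $L^{\infty}$ estimate (described below) shows that $T_n$ leaves a sufficiently large $L^{2}$-ball invariant, and Schauder produces a fixed point $u_n$.

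Nonnegativity follows from testing \eqref{15:56} with $-u_n^{-}\in W^{1,2}_{0}(\Omega)$: on $\{u_n<0\}$ we have $u_n^{+}=0$ and hence $F_n(u_n)=n^{\gamma}>0$, so the right-hand side is $\le 0$, while the left-hand side equals $-\int_{\{u_n<0\}}A\nabla u_n\nabla u_n\le -\alpha\int_{\Omega}|\nabla u_n^{-}|^{2}$. This forces $u_n^{-}\equiv 0$, so we may replace $(u_n^{+})^{r+\gamma-1}$ by $u_n^{r+\gamma-1}$ from now on.

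For the $L^{\infty}$ bound I would use truncations with two different level choices. The universal constant $C_{0}$ arises by testing with $G_k(u_n)=(u_n-k)^{+}$ for $k\ge 1$, discarding the nonnegative absorption contribution $v u_n^{r+\gamma-1}/(1/n+u_n)^{\gamma}$, and using $(1/n+u_n)^{-\gamma}\le k^{-\gamma}\le 1$ on $\{u_n>k\}$ to obtain
\[
\alpha\int_{\Omega}|\nabla G_k(u_n)|^{2}\le \int_{\Omega} G_k(u_n).
\]
Sobolev's inequality and the Stampacchia iteration lemma then yield $\|u_n\|_{L^{\infty}}\le C_{0}$ with $C_{0}=C_{0}(\alpha,N,|\Omega|)$, independently of $n$, $v$ and $\gamma$. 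For the alternative bound $M:=\|v\|_{L^{\infty}}^{1/(1-r-\gamma)}=\|v\|_{L^{\infty}}^{-1/(r+\gamma-1)}$, one tests with $G_M(u_n)$ and \emph{keeps} the absorption term on the left. The choice of $M$ is calibrated so that $M^{r+\gamma-1}=\|v\|_{L^{\infty}}^{-1}$; on $\{u_n>M\}$ the inequality $u_n^{r+\gamma-1}>\|v\|_{L^{\infty}}^{-1}$ should then compensate the numerator $1$ of $F_n$ in an integral sense and force $\|G_M(u_n)\|_{W^{1,2}_{0}}=0$.

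The principal obstacle I foresee is precisely this second bound: since $v$ need not attain its essential supremum on $\{u_n>M\}$, the pointwise inequality $v u_n^{r+\gamma-1}\ge 1$ is not automatic, and the absorption argument must be executed with some care (possibly by exploiting the structure of the fixed-point map, or by combining integral manipulations with a Stampacchia-type iteration at level $M$). The Schauder step, the positivity, and the universal bound $C_{0}$ are in contrast fairly routine once the regularisation is correctly set up.
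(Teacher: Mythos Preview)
Your overall strategy---Schauder fixed point followed by Stampacchia truncation---is the paper's, but there is a genuine gap precisely where you flag it, and the paper's resolution is one you have not found. The bound $\|u_n\|_{L^\infty}\le M:=\|v\|_{L^\infty}^{1/(1-r-\gamma)}$ cannot be obtained by testing the fixed-point equation with $G_M(u_n)$: since $v$ need not come close to its essential supremum on $\{u_n>M\}$, the right-hand side has no sign there, and no iteration repairs this. A second, related problem is that your claim $F_n(w)\in L^\infty(\Omega)$ for every $w\in L^2(\Omega)$ is false when $r>1$: for large $w^+$ the quotient $v(w^+)^{r+\gamma-1}/(1/n+w^+)^\gamma$ behaves like $v(w^+)^{r-1}$, which is unbounded. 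So as written $T_n$ is not even well-defined on $L^2$ in that range.

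The paper removes both obstacles with a single device: it builds the level $b:=\|v\|_{L^\infty}^{1/(1-r-\gamma)}$ into the fixed-point map, solving
\[
-\mathrm{div}(A(x)\nabla u_w)=\frac{1}{(1/n+|w|)^{\gamma}}\bigl(1-v\,|w|^{\gamma+r-1}\bigr)\,\chi_{\{|w|\le b\}}.
\]
The right-hand side now satisfies $0\le\text{RHS}\le n^\gamma$ for \emph{every} $w$, so the map is well-defined, continuous and compact, the energy estimate giving the invariant ball is trivial, and any fixed point $u_n$ is automatically nonnegative (no $-u_n^-$ test needed). Testing the fixed-point equation with $G_b(u_n)$ then yields zero on the right, because the characteristic function vanishes on $\{u_n>b\}$ while $G_b(u_n)$ vanishes on $\{u_n\le b\}$; hence $\int|\nabla G_b(u_n)|^2=0$, i.e.\ $u_n\le b$, and the truncation disappears a posteriori. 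The universal bound $C_0$ is then obtained exactly as you describe, by testing with $G_k(u_n)$ for $k\ge 1$ and discarding the absorption term.
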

\begin{proof}
Let us define a map $F:W^{1,2}_0(\Omega)\to W^{1,2}_0(\Omega)$ such that $F(w)=u_w$ is the unique solution to
\be\label{auxx}
-\div(A(x) u_w)=\frac{1}{(\frac1n+|w|)^{\gamma}}(1- v  |w|^{\gamma+r-1})\chi_{\{|w|\le b\}},
\ee
with $b=\| v \|_{\elle{\infty}}^{\frac1{1-r-\gamma}}$. We stress that the left hand side above satisfies
\[
0\le \frac{1}{(\frac1n+|w|)^{\gamma}}(1- v  |w|^{\gamma+r-1})\chi_{\{|w|\le b\}}\le n.
\] 
This easily imply that the map $w\mapsto F(w)$ is continuous. Indeed, if $\{w_m\}\subset W^{1,2}_0(\Omega)$ is a sequence that strongly converges to $w$ in $W^{1,2}_0(\Omega)$, then the sequence of the images $u_{w_m}=F(w_m)$ strongly converges in $W^{1,2}_0(\Omega)$ to $u_w=F(w)$ the unique solution of \eqref{auxx} with datum $w$ (recall that $n$ is fixed at this stage). Similarly it follows that the map is completely continuous: if $w_m$ is bounded in $W^{1,2}_0(\Omega)$, up to a subsequence, $w_m$ strongly converges in $\elle 2$ to some $w\in W^{1,2}_0(\Omega)$, and then we infer that  $v_{w_m}=T(w_m)$ strongly converges in $W^{1,2}_0(\Omega)$  to  $T(w)=u_w$.  Moreover, by taking  $u_w$ as a test function in \eqref{auxx}, we get
\[
\left(\io|\nabla u_w|^2\right)^{\frac12}\le \frac n{\mathcal S\alpha}|\Omega|^{1-\frac1{2^*}}.
\]
Then, for any fixed $n\in\mathbb{N}$, there exist a fixed point $u_n=T(u_n)\in W^{1,2}_0(\Omega)$ that solves 
\be\label{10:44}
-\div(A(x) u_n)=\frac{1}{(\frac1n+|u_n|)^{\gamma}}(1- v  |u_n|^{\gamma+r-1})\chi_{\{|u_n|\le b\}},
\ee
in the energy sense. Since the right hand side is positive (thanks to the choice of $b$), it follows that $u_n$ is positive. To prove the $L^{\infty}$-bound \eqref{14;25}, let us recall the definition of
\begin{equation}
\label{25a}
T_k(s)=\begin{cases}
\displaystyle
-k,~~s\leq -k,\\
s,~~\vert s \vert \leq k,\\
k,~~ s \geq k,
\end{cases}
\end{equation}
and $G_k(s) = s-T_k(s)$ for $s\in \mathbb{R}$ and $k>0$. So, taking $G_b(u_n)=\max\{b,u_n\}-b$  as a test function in the weak formulation of the equation above and noting that $G_b(u_n)\equiv 0$ on the set $\{0\le u_n\le b\}$, we obtain 
\[
\alpha\int_{u_n>b}|\nabla u_n|^2=\alpha\io|\nabla G_b(u_n)|^2\le \int_{\{0\le u_n\le b\}}\frac{1}{(\frac1n+u_n)^{\gamma}}(1- v  u_n^{\gamma+r-1}) G_b(u_n)=0.
\]
Then we deduce that 
\[
|u_n|\le b=\| v \|_{\elle{\infty}}^{\frac1{1-r-\gamma}}.
\]
Moreover, taking $G_k(u_n)$, $k\ge1$, as a test function in \eqref{10:44} again, and disregarding the negative contribution depending on $ v $, we also get
\[
\alpha\io|\nabla G_k(u_n)|^2\le \io G_k(u_n).
\]
Taking advantage of Th\'eor\`eme 4.2 in \cite{S66}, we then deduce that there exists a constant $C_0=C_0(\alpha,N,\Omega)$ such that $\|u_n\|_{\elle{\infty}}\le C_0$ (for a more recent reference see Theorem 6.6 of \cite{bc}). Then we have $\|u_n\|_{\elle{\infty}}\le \min\{\| v \|_{\elle{\infty}}^{\frac1{1-r-\gamma}},C_0\}$. This ends the proof.\\
\end{proof}

The below Lemma will permit us prove the positiveness of the solution of the problem \eqref{sing} for some variations of $\gamma>0$.

\begin{lemma}\label{16:51}
Let $\Omega$ be a bounded open set of $\mathbb{R}^N$  and let $B>0$  a real number. Assume $\gamma>0$, $r>1-\gamma $, and $u\in W_0^{1,2}(\Omega)$, with $0\le u\le B^{\frac{1}{1-r-\gamma}}$, satisfies
\[
\io A(x)\nabla u \nabla \phi\ge \io \frac{1}{(1+u)^{\gamma}}(1-B u^{\gamma+r-1})\phi \ \ \  \forall \phi\in W^{1,2}_0(\Omega).
\]
Then, for any $\omega\subset\subset \Omega$, there exists a constant $c_{\omega,B}>0$ such that
\be\label{bfb}
 \ u\ge c_{\omega,B} \ \ \ \mbox{in} \ \ \ \omega.
\ee
\end{lemma}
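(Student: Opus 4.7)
The strategy is to build an explicit positive subsolution on a domain $\omega'$ slightly larger than $\omega$ and to compare it with $u$ through a weak-maximum-principle argument.

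\textbf{Step 1 (structure of the nonlinearity).} Write the right-hand side as $g(u)$ with
$g(s):=\dfrac{1-Bs^{\gamma+r-1}}{(1+s)^{\gamma}}$,
and set $M:=B^{1/(1-r-\gamma)}$, so the hypotheses read $0\le u\le M$ and $-\div(A(x)\nabla u)\ge g(u)$ weakly. Since $\gamma+r-1>0$, the numerator $1-Bs^{\gamma+r-1}$ is strictly decreasing on $[0,M]$ (from $1$ to $0$) while $(1+s)^{\gamma}$ is strictly increasing, hence $g$ is strictly decreasing on $[0,M]$. In particular $g_{*}:=g(M/2)>0$ and $g(s)\ge g_{*}$ for every $s\in[0,M/2]$.

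\textbf{Step 2 (subsolution on an intermediate domain).} Pick a smooth open set $\omega'$ with $\omega\subset\subset\omega'\subset\subset\Omega$ and let $\phi_{0}\in W_{0}^{1,2}(\omega')$ solve the linear auxiliary problem $-\div(A(x)\nabla\phi_{0})=1$ in $\omega'$. Stampacchia's $L^{\infty}$ bound gives $\phi_{0}\in L^{\infty}(\omega')$, while De Giorgi--Nash regularity combined with the strong maximum principle for linear uniformly elliptic equations forces $\phi_{0}$ to be continuous and strictly positive in $\omega'$; in particular $m_{0}:=\min_{\overline{\omega}}\phi_{0}>0$. Set
$\lambda:=\min\bigl\{g_{*},\,M/(2\|\phi_{0}\|_{\infty})\bigr\}>0$
and $\underline u:=\lambda\phi_{0}$, extended by zero to $\Omega\setminus\omega'$. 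By construction $0\le\underline u\le M/2$ everywhere, so $g(\underline u)\ge g_{*}$, and $-\div(A(x)\nabla\underline u)=\lambda\le g_{*}\le g(\underline u)$ weakly in $\omega'$; thus $\underline u$ is a weak subsolution of $-\div(A\nabla w)=g(w)$ with zero Dirichlet data on $\partial\omega'$.

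\textbf{Step 3 (comparison).} Since $u\ge 0=\underline u$ on $\partial\omega'$ in the trace sense, $\psi:=(u-\underline u)^{-}$ belongs to $W_{0}^{1,2}(\omega')$ and its zero-extension lies in $W_{0}^{1,2}(\Omega)$. Testing the supersolution inequality for $u$ and the subsolution relation for $\underline u$ against $\psi\ge 0$ and subtracting, using $\nabla(u-\underline u)\cdot\nabla\psi=-|\nabla\psi|^{2}$ a.e.\ on $\{u<\underline u\}$, yields
$\alpha\int_{\omega'}|\nabla\psi|^{2}\le\int_{\{u<\underline u\}}\bigl(\lambda-g(u)\bigr)\psi$.
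On $\{u<\underline u\}$ we have $u<\underline u\le M/2$, so Step 1 gives $g(u)\ge g_{*}\ge\lambda$, which makes the right-hand side non-positive. Hence $\psi\equiv 0$, i.e.\ $u\ge\lambda\phi_{0}$ a.e.\ in $\omega'$, and in particular $u\ge\lambda m_{0}=:c_{\omega,B}>0$ on $\omega$.

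\textbf{Main obstacle.} The delicate point is the degeneration $g(M)=0$: any global subsolution has to be kept strictly away from the threshold $M$. The scaling factor $\lambda$ is designed precisely to force $\underline u\le M/2$, the range where $g$ is uniformly bounded below by $g_{*}$; once this is in place the remainder is a routine weak-maximum-principle comparison exploiting the monotonicity of $g$ on $[0,M/2]$ and the ellipticity of $A$. The resulting constant $c_{\omega,B}$ naturally depends on $B$ (through $M$ and $g_{*}$) and on the distance from $\omega$ to $\partial\Omega$ (through the intermediate set $\omega'$ and the positivity $m_{0}$ of $\phi_{0}$), consistently with the statement of the lemma.
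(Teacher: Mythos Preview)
Your argument is correct, but it is not the route taken in the paper.  The paper does not construct a barrier on an intermediate subdomain; instead it invokes the preceding Lemma~\ref{16:09} (with the constant potential $v\equiv B$ and $n=1$) to produce a function $w\in W_0^{1,2}(\Omega)$ that \emph{solves} the equation
\[
-\div(A(x)\nabla w)=\dfrac{1-Bw^{\gamma+r-1}}{(1+w)^{\gamma}}\quad\text{in }\Omega,
\qquad 0\le w\le B^{1/(1-r-\gamma)}.
\]
A single comparison on the whole of $\Omega$, using the monotonicity of $g$ on the full interval $[0,M]$, yields $w\le u$.  Since the right-hand side above is nonnegative, $w$ is a weak supersolution of the bare operator, $-\div(A\nabla w)\ge 0$, and the strong maximum principle then gives the interior lower bound for $w$, hence for $u$.

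Your approach trades the dependence on Lemma~\ref{16:09} for an explicit, hand-built barrier $\lambda\phi_0$ on a compactly contained $\omega'$.  This makes the lemma self-contained---you never need to solve the nonlinear equation, only the linear problem $-\div(A\nabla\phi_0)=1$---at the cost of introducing the auxiliary domain $\omega'$ and the extra scaling parameter $\lambda$ that keeps $\underline u\le M/2$.  Both proofs ultimately rest on the same two ingredients (monotonicity of $g$ and a weak-comparison step), but the paper's version is shorter precisely because the existence of the global comparison function $w$ has already been established, whereas your version would still go through even if Lemma~\ref{16:09} were not available.
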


\begin{proof}
Let us consider the unique $w\in W_0^{1,2}(\Omega)$ that satisfies
\be\label{11;07}
\io A(x)\nabla w\nabla \phi= \io \frac{1}{(1+w)^{\gamma}}(1-B w^{\gamma+r-1})\phi \ \ \ \ \  \forall \phi\in W^{1,2}_0(\Omega)
\ee
and such that $0\le w\le  B^{\frac{1}{1-r-\gamma}}$. Existence and uniqueness of such a function is provided by Lemma \ref{16:09} with the special  choices $v=B$ and $n=1$. Choosing $(w-u)_+=\max\{(w-u),0\}$ as a test function in the weak formulations of $w$ and $u$ and taking the difference, we get
\be\label{11:57}
\alpha\io |\nabla (w-u)_+|^2\leq \io \left[\frac{(1-B w^{\gamma+r-1})}{(c+w)^{\gamma}}-\frac{(1-Bu^{\gamma+r-1})}{(c+u)^{\gamma}}\right](w-u)_+
\ee
\[
\le\io \left[\frac{(1-B w^{\gamma+r-1})}{(1+w)^{\gamma}}-\frac{(1-B u^{\gamma+r-1})}{(1+u)^{\gamma}}\right](w-u)_+.
\]
Since the function
\[
s\to \frac{1-B s^{\gamma+r-1}}{(1+s)^{\gamma}}
\]
is strictly decreasing on $0<s\le B^{\frac1{1-r-\gamma}}$ and both $w,v$ are smaller then $B^{\frac1{1-r-\gamma}}$, we have that the right hand side of \eqref{11:57} is negative. This implies that $(w-u)_+\equiv 0$ so that 
\[
w\le  u.
\]
Since $w\le  B^{\frac{1}{1-r-\gamma}}$, we deduce from \eqref{11;07} that
\[
\io A(x)\nabla w\nabla \phi\ge0 \ \ \ \ \  \forall \phi\in W^{1,2}_0(\Omega), \ \ \ \phi\ge0.
\]
We can then apply the Strong Maximum Principle to $w$ to infer that for any $\omega\subset\subset \Omega$ there exists a constant $c_{\omega,B}>0$ such that
\[
c_{\omega,B}\le w.
\]
Combining together the two previous inequality, we conclude the proof of the Lemma.\\
\end{proof}

The following Lemma provides us with a refinement of the bound from below \eqref{bfb}, in the case of smoother assumptions on $\partial\Omega$ and on the entries of the matrix $A(x)$.\\
Here and in the following  we will set $d(x)=$dist$(x,\partial\Omega)$, the distance of $x\in\Omega$ to the boundary.
\begin{lemma}\label{regular} Let us assume \eqref{alpha}, \eqref{30.08}, take $\gamma >1$, $r>1-\gamma$, set $\tau={2}/{(\gamma+1)}$ and let $u_n$ be the solution to \eqref{15:56}. Then we have that
\be\label{tenerifebis}
c\,d(x)^{\tau}-\frac1n \le \vn (x) ,~x \in \Omega,
\ee
where   $c=c(\|v \|_{\elle{\infty}})$ decreases as $\|v\|_{\elle{\infty}}$ increases.
\end{lemma}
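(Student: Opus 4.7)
The plan is to construct an explicit barrier $W(x):=\lambda\,d(x)^{\tau}$ in a tubular neighborhood $\Omega_{\delta}:=\{d<\delta\}$ of $\partial\Omega$ and to compare it with the shifted function $U:=u_{n}+\tfrac{1}{n}$, which, since $\nabla U=\nabla u_{n}$, satisfies $-\div(A(x)\nabla U)=U^{-\gamma}(1-v\,u_{n}^{\gamma+r-1})$. By \eqref{30.08} I fix $\delta_{0}>0$ so that $d\in C^{2,1}(\overline{\Omega_{\delta_{0}}})$ with $|\nabla d|=1$ on $\Omega_{\delta_{0}}$. A direct computation based on $|\nabla d|=1$, the Lipschitz regularity of $A$, and the $C^{1,1}$ bound on $d$ gives
\[
-\div(A(x)\nabla W)\le \lambda\tau(1-\tau)\beta\,d^{\tau-2}+C\lambda\,d^{\tau-1}\quad\text{in }\Omega_{\delta_{0}}.
\]
The choice $\tau=2/(\gamma+1)$ forces the algebraic identity $\tau\gamma=2-\tau$, whence $W^{-\gamma}=\lambda^{-\gamma}d^{\tau-2}$, matching the leading divergence rate exactly. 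Moreover the exponent $\tau(r-1)-(\tau-2)=2(r+\gamma-1)/(\gamma+1)$ is strictly positive precisely because $r>1-\gamma$, so $\|v\|_{\infty}W^{r-1}=\|v\|_{\infty}\lambda^{r-1}d^{\tau(r-1)}$ is strictly sub-dominant with respect to $W^{-\gamma}$ near $\partial\Omega$. Taking $\lambda=\lambda(\|v\|_{\infty})$ small enough and then $\delta\le\delta_{0}$ small enough absorbs all lower-order contributions into half of $\lambda^{-\gamma}d^{\tau-2}$ and yields the subsolution inequality
\[
-\div(A(x)\nabla W)\le \frac{1}{W^{\gamma}}-\|v\|_{\infty}W^{r-1}=:h(W)\quad\text{in }\Omega_{\delta}.
\]

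\textbf{Comparison step.} Since $0\le u_{n}\le U$ and $\gamma+r-1>0$, monotonicity of $s\mapsto s^{\gamma+r-1}$ gives $-\div(A(x)\nabla U)\ge U^{-\gamma}-\|v\|_{\infty}U^{r-1}=h(U)$. I then use $(W-U)_{+}$ as test function, which is admissible: on $\partial\Omega$ we have $W=0<\tfrac1n\le U$, so the set $\{W>U\}$ forces $\lambda d^{\tau}>\tfrac1n$ and therefore is separated from $\partial\Omega$ by a positive distance (making $W$ smooth on the support of $(W-U)_{+}$); moreover Lemma \ref{16:51} applied to $u_{n}$ with $B=\|v\|_{\infty}$ produces a constant $c_{\delta}=c_{\delta}(\|v\|_{\infty})>0$ with $u_{n}\ge c_{\delta}$ on $\{d\ge\delta\}$, and a further reduction of $\lambda$ so that $\lambda\delta^{\tau}\le c_{\delta}$ confines $\{W>U\}$ to a compact subset of $\Omega_{\delta}$. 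Subtracting the two inequalities, testing with $(W-U)_{+}$, and using \eqref{alpha} gives
\[
\alpha\int_{\Omega_{\delta}}|\nabla(W-U)_{+}|^{2}\le \int_{\Omega_{\delta}}\bigl(h(W)-h(U)\bigr)(W-U)_{+}.
\]
When $r\ge1$ the function $h$ is strictly decreasing on $(0,\infty)$, so the right-hand side is non-positive on $\{W>U\}$, forcing $(W-U)_{+}\equiv 0$, i.e.\ $u_{n}\ge\lambda d^{\tau}-\tfrac1n$ in $\Omega_{\delta}$. On $\Omega\setminus\Omega_{\delta}$ the same bound follows from Lemma \ref{16:51} after a final reduction of $\lambda$, establishing \eqref{tenerifebis}.

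\textbf{Main obstacle.} The delicate case is $r\in(1-\gamma,1)$: then $h(s)=s^{-\gamma}-\|v\|_{\infty}s^{r-1}$ is not globally monotone, since $h'$ changes sign, and the comparison can only be run where both $W$ and $U$ lie in the interval $(0,s_{0})$ on which $h'<0$, with $s_{0}=\bigl(\gamma/(\|v\|_{\infty}(1-r))\bigr)^{1/(\gamma+r-1)}$. Enforcing $\lambda\delta^{\tau}\le s_{0}$ forces $\lambda$ to decrease as $\|v\|_{\infty}$ grows, which is precisely the stated monotonicity of $c(\|v\|_{\infty})$ in \eqref{tenerifebis}. The principal technical difficulty is therefore the quantitative bookkeeping: coordinating the subsolution inequality, the matching with Lemma \ref{16:51} on $\{d=\delta\}$, and the confinement to the monotonicity range of $h$, all with a single choice of $\lambda=\lambda(\|v\|_{\infty})$.
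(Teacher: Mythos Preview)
Your argument is correct, but the paper takes a different and somewhat cleaner route. Instead of building the barrier out of the distance function $d$ directly, the paper uses the first eigenfunction $\varphi_1$ of $-\div(A(x)\nabla\cdot)$ (normalized with $\|\varphi_1\|_{\elle\infty}=1$) and sets $w_n:=c_3\varphi_1^{\tau}-\tfrac1n$. Because $-\div(A\nabla\varphi_1)=\lambda_1\varphi_1$, the divergence computation produces only two clean terms, and a single algebraic condition on $c_3$ (the paper's inequality \eqref{recall}) yields the subsolution property \emph{globally} in $\Omega$: no tubular neighborhood $\Omega_\delta$, no interior matching via Lemma~\ref{16:51}, and no splitting of the domain. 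The shift is also placed differently: the paper keeps it on the barrier side and compares $w_n$ with $u_n$ via $(w_n-u_n)_+$, using that $s\mapsto \dfrac{1-v(x)s^{\gamma+r-1}}{(\tfrac1n+s)^{\gamma}}$ is strictly decreasing on the whole interval $(0,\|v\|_{\elle\infty}^{1/(1-r-\gamma)})$, where both $w_n$ and $u_n$ live by construction. This monotonicity holds for every $r>1-\gamma$, so the case distinction $r\ge1$ versus $r<1$ and the threshold $s_0$ that you correctly identify as the ``main obstacle'' simply do not arise. In short, your hands-on barrier $\lambda d^\tau$ forces the localization and the bookkeeping you describe, whereas the eigenfunction trick absorbs the lower-order terms and furnishes a comparison function that is monotone on the full relevant range; both methods deliver the same monotone dependence $c=c(\|v\|_{\elle\infty})$, but the paper's is shorter.
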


\begin{proof}
Thanks to the regularity assumption \eqref{30.08}, there exist $\lambda_1>0$ and $\varphi_1\in W^{1,2}_0(\Omega)$ with $\|\varphi_1\|_{\elle{\infty}}=1$ and $|\nabla \varphi_1|\in \elle{\infty}$ such that
\[
\dys
\begin{cases}
-\div(A(x)\varphi_1)=\dys \lambda_1\varphi_1 \qquad & \mbox{in } \Omega\,,\\
\varphi_1 =0 & \mbox{on }  \partial \Omega\,;
\end{cases}
\]
moreover there exist two constant $c_1<c_2$ such that
\be\label{30.08bis}
c_1 d(x)\le \varphi_1(x)\le c_2 d(x);
\ee
see for instance Lemma 2 of \cite{diaz}. Let us set now
\[
 w_n =c_3\varphi_1(x)^{\tau}-\frac1n, \ \ \ \ \mbox{with} \ \  \ \tau=\frac{2}{\gamma+1},
\]
where $c_3>0$ is such that
\be\label{recall}
\lambda_1c_3^{1+\gamma}\tau+\beta c_3^{1+\gamma}\tau(1-\tau)\|\nabla \varphi_1\|_{\elle{\infty}}^2+\| v \|_{\elle{\infty}} c_3^{r-1+\gamma}-1\le0.
\ee
Let us stress that $c_3$ has an inverse dependence with respect to $\| v \|_{\elle{\infty}}$, namely if $\| v \|_{\elle{\infty}}$ increases $c_3$ has to be smaller, and that, in the case under consideration, $1-\tau>0$.\\
We claim that $w_n$ is a subsolution to \eqref{15:56}, namely
\be\label{8.7}
I:=\io A(x) \nabla w_n \nabla \phi-\io\frac{1}{(\frac1n+ w_n)^{\gamma}}\big(1- v   w_n^{\gamma+r-1} \big)\phi\le 0 \ \ \ \forall \ \phi\in W^{1,2}_0(\Omega), \ \phi\ge0.
\ee
Once the claim is proved, we can take $(w_n-u_n)_+$ as test a function in \eqref{8.7} and in \eqref{15:56} and proceed to their difference to obtain
\[
\alpha\io |\nabla (w_n-u_n)_+|^2\le \io \left[\frac{(1- v   w_n^{\gamma+r-1} )}{(\frac1n+ w_n)^{\gamma}}-\frac{(1-vu_n^{\gamma+r-1})}{(\frac1n+u_n)^{\gamma}}\right](w_n-u_n)_+
\] 
\[
=\int_{w_n\ge u_n} \left[\frac{(1- v   w_n^{\gamma+r-1} )}{(\frac1n+ w_n)^{\gamma}}-\frac{(1-vu_n^{\gamma+r-1})}{(\frac1n+u_n)^{\gamma}}\right](w_n-u_n)_+\le 0,
\]
where the last inequality comes from the facts that, for any $n>0$ and almost everywhere $x \in \Omega$, the function
\[
s\to \frac{1- v (x)s^{\gamma+r-1}}{(\frac 1n+s)^{\gamma}} \ \ \    s\in (0, \| v \|_{\elle{\infty}}^{\frac1{1-r-\gamma}} )
\]
is strictly decreasing, and both $w_n, u_n$ are smaller then $\| v \|_{\elle{\infty}}^{\frac1{1-r-\gamma}}$ 
(thanks to the choice of $c_3$ and to Lemma \ref{16:09} respectively). This implies $(w_n-u_n)_+\equiv0$, namely $w_n\le u_n$ almost everywhere in $\Omega$. Using \eqref{30.08bis} we conclude the proof of the Lemma.\\

To prove the claim let us take, at first, $0\le \phi\in C^1_c(\Omega)$ and notice that $w_n^{\tau-1}\phi$ is an admissible test function for \eqref{8.7}. 
Standard computation shows that (recall that $1-\tau>0$)
\[
\io A(x) \nabla w_n \nabla \phi=c_3\tau\io A(x)\varphi_1^{\tau-1} \nabla \varphi_1 \nabla \phi
\]
\[
= c_3\tau\io \big(A(x) \nabla \varphi_1\nabla (\varphi_1^{\tau-1}\phi)+(1-\tau)A(x)\nabla\varphi_1\nabla\varphi_1 \varphi_1^{\tau-2}\phi\big)
\]
\[
\le c_3\tau\io \big(\lambda_1 \varphi_1^{\tau}+\beta(1-\tau)|\nabla\varphi_1|^2 \varphi_1^{\tau-2}\big)\phi.
\]
On the other hand,
\[
-\frac{1}{(\frac1n+ w_n)^{\gamma}}\big(1- v   w_n^{\gamma+r-1} \big)\le -\frac{1}{(c_3\varphi_1^{\tau})^{\gamma}}\big(1- v   (c_3\varphi_1^{\tau})^{\gamma+r-1} \big).
\]
Plugging together the previous information we obtain
\[
I
\le\io \big(\lambda_1c_3^{1+\gamma}\tau+\beta c_3^{1+\gamma}\tau(1-\tau)\|\nabla \varphi_1\|_{\elle{\infty}}^2+\| v \|_{\elle{\infty}} c_3^{\gamma+r-1}-1\big)c_3^{-\gamma}\varphi_1^{\tau-2}\phi\le0,
\]
where the last inequality comes from the fact that $\|\varphi_1\|_{\elle{\infty}}\le1$ and the choice of the constant $c_3$. This proves that \eqref{8.7} holds true for any $0\le \phi\in C^1_c(\Omega)$. To conclude the proof of the claim, it is sufficient to use density arguments. This ends the proof.

\end{proof}

\section{Finite energy solutions}

In this section, we prove existence of energy solutions for the problem \eqref{sing}. We begin proving existence of solution for the single singular equation of the system and a given nonegative function $v\in\elle{\infty}$.

\begin{lemma}\label{primo0}
Let $\Omega$ be a bounded open set of $\mathbb{R}^N$. Assume $\gamma\in(0,1]$, $r>1-\gamma$ and $0\le v\in\elle{\infty}$.
Then there exists a unique $u\in W^{1,2}_0(\Omega)\cap\elle{\infty}$ such that
\begin{equation}\label{bound}
\frac{\phi}{u^{\gamma}}\in\elle 1 \ \ \ \  \|u\|_{\elle{\infty}}\le \min\{\|v\|_{\elle{\infty}}^{\frac1{1-r-\gamma}},C_0\},\ \ \ \ \|u\|_{W^{1,2}_0(\Omega)}\le  |\Omega|C_0^{1-\gamma},
\end{equation}
\be\label{22:03}
\ \ \mbox{and} \ \ \ \ \ \ \io A(x)\nabla u\nabla\phi+\io v u^{r-1}\phi =\io\frac{\phi}{u^{\gamma}} \ \ \ \forall \ \phi\in W^{1,2}_0(\Omega),
\ee
for any $\phi\in W^{1,2}_0(\Omega)$, where $C_0$ is the constant given in Lemma \ref{16:09}.
\end{lemma}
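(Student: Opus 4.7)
The plan is to build $u$ as the monotone limit of the approximating sequence $\{u_n\}$ furnished by Lemma \ref{16:09}. Rewriting \eqref{15:56} as
\[
\int_\Omega A(x)\nabla u_n \nabla \phi + \int_\Omega \frac{v\,u_n^{\gamma+r-1}}{(\tfrac1n+u_n)^\gamma}\,\phi = \int_\Omega \frac{\phi}{(\tfrac1n+u_n)^\gamma}
\]
makes it clear that this is the approximate form of \eqref{22:03}. The $L^\infty$ bound in \eqref{bound} is already in Lemma \ref{16:09}; for the $W^{1,2}_0$ bound I would test with $u_n$ itself, drop the nonnegative potential term, and use the elementary inequality $u_n/(\tfrac1n+u_n)^\gamma\le u_n^{1-\gamma}\le C_0^{1-\gamma}$ -- the only point where the hypothesis $\gamma\le 1$ enters -- to conclude by ellipticity.

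Monotonicity $u_n\le u_{n+1}$ follows from the observation that on $[0,b]$ with $b=\|v\|_\infty^{1/(1-r-\gamma)}$ the map $f_n(s)=(1-vs^{\gamma+r-1})/(\tfrac1n+s)^\gamma$ is nonnegative, strictly decreasing in $s$, and pointwise nondecreasing in $n$. Testing the difference of two consecutive approximate equations with $(u_n-u_{n+1})_+$ and exploiting these two monotonicities forces $(u_n-u_{n+1})_+\equiv 0$. The whole sequence then converges a.e. (and weakly in $W^{1,2}_0$, strongly in $L^2$) to some $u\in W^{1,2}_0(\Omega)\cap L^\infty(\Omega)$ realizing the bounds in \eqref{bound}. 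Positivity is inherited from Lemma \ref{16:51} applied to each $u_n$ with $B=\|v\|_\infty$, since $(\tfrac1n+u_n)^\gamma\le(1+u_n)^\gamma$ for $n\ge 1$; this yields $u_n\ge c_{\omega,B}>0$ on every $\omega\subset\subset\Omega$ and hence $u>0$ a.e. in $\Omega$.

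The delicate step -- the main obstacle -- is passing to the limit in the two singular integrals for a generic test function $\phi\in W^{1,2}_0(\Omega)$, because the pointwise bound from Lemma \ref{16:51} is only local. I would first argue for $\phi\in C^1_c(\Omega)$: the uniform lower bound $u_n\ge c_\omega$ on the support of $\phi$ makes $1/(\tfrac1n+u_n)^\gamma$ uniformly bounded, and dominated convergence gives the identity. To promote this to general $\phi\in W^{1,2}_0(\Omega)$, I would apply Fatou's Lemma with $\phi\ge 0$ to the rearranged approximate equation (weak convergence in $W^{1,2}_0$ handles the diffusive term, and the nonlinear term is treated by dominated convergence on compacts together with the bound $u_n^{\gamma+r-1}/(\tfrac1n+u_n)^\gamma\le u_n^{r-1}$ away from the zero set), which yields both the integrability $\phi/u^\gamma\in L^1(\Omega)$ and the $\le$ direction of \eqref{22:03}; the reverse inequality follows by a density approximation of $\phi$ in $W^{1,2}_0$-norm by $C^1_c$ test functions, using the integrability just established, and the signed case by the splitting $\phi=\phi^+-\phi^-$. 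Finally, uniqueness is immediate from the comparison scheme already used in Lemma \ref{16:51}: for two solutions $u_1,u_2$, testing the difference with $(u_1-u_2)_+$ and invoking the strict monotonicity of $s\mapsto (1-vs^{\gamma+r-1})/s^\gamma$ on $(0,b]$ forces $u_1\le u_2$, and by symmetry $u_1=u_2$.
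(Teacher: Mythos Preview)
Your overall strategy matches the paper's: approximate via Lemma \ref{16:09}, obtain the $L^\infty$ and $W^{1,2}_0$ bounds (the latter by testing with $u_n$ and using $\gamma\le 1$), pass to the limit for $\phi\in C^1_c(\Omega)$ thanks to the uniform local lower bound of Lemma \ref{16:51}, and conclude uniqueness from the strict monotonicity of $s\mapsto (1-vs^{\gamma+r-1})/s^\gamma$ on $(0,b]$. Your monotonicity $u_n\le u_{n+1}$ is correct and avoids subsequences, but the paper does not use it.

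The one substantive weakness is the extension from $C^1_c$ to general $\phi\in W^{1,2}_0$. First, for $r<1$ your bound $u_n^{\gamma+r-1}/(\tfrac1n+u_n)^\gamma\le u_n^{r-1}$ gives no global domination of the potential term (only a local one via $u_n\ge c_\omega$), so you cannot pass the two sides of the approximate equation separately; apply Fatou instead to the combined nonnegative integrand $(1-vu_n^{\gamma+r-1})/(\tfrac1n+u_n)^\gamma\,\phi$ to obtain integrability and the supersolution inequality in one stroke. Second, and more seriously, your density step for the reverse inequality does not follow from the integrability alone: knowing $\phi/u^\gamma\in L^1$ for a fixed $\phi$ does not yield $\int\phi_k/u^\gamma\to\int\phi/u^\gamma$ along a generic $C^1_c$ approximation $\phi_k\to\phi$. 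What is actually needed is continuity of $\phi\mapsto\int_\Omega(1-vu^{\gamma+r-1})\,u^{-\gamma}\phi$ on $W^{1,2}_0(\Omega)$. The paper secures this directly: for $\phi_k\in C^1_c(\Omega)$ with $\phi_k\to\phi$ in $W^{1,2}_0$, it tests the equation for $u$ (valid for compactly supported tests) with the nonnegative, compactly supported Lipschitz function $(\eps^2+|\phi_k-\phi_m|^2)^{1/2}-\eps$, lets $\eps\to 0$ via Fatou, and reads off
\[
\int_\Omega \frac{1-vu^{\gamma+r-1}}{u^\gamma}\,|\phi_k-\phi_m|\ \le\ \beta\,\|u\|_{W^{1,2}_0}\,\|\phi_k-\phi_m\|_{W^{1,2}_0},
\]
so that $\{(1-vu^{\gamma+r-1})\,u^{-\gamma}\phi_k\}$ is Cauchy in $L^1(\Omega)$ and the identity \eqref{22:03} passes to the limit. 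Your Fatou-plus-density scheme can be repaired along these lines, but as written the reverse inequality is not justified.
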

\begin{remark}
Let us stress that the uniqueness refers to the class of solutions to \eqref{22:03} that further satisfy the $\elle{\infty}$ bound in \eqref{bound}.
\end{remark}
\begin{proof} 
Let $\{u_n\}$ be the sequence of solutions given by Lemma \ref{16:09}.
Taking $u_n$ as a test function in \eqref{15:56}, we get
\be\label{tenerife}
\alpha \io|\nabla u_n|^2 \le \io\frac{\vn}{(\frac1n+u_n)^{\gamma}}\le |\Omega|C_0^{1-\gamma},
\ee
where we have used that $s\to \frac{s}{(\frac1n+s)^{\gamma}}$ is increasing for $s\ge0$ and that $0\le u_n\le C_0$.
Hence there exists $u\in W^{1,2}_0(\Omega)\cap\elle{\infty}$ such that $u_n \to u$ weakly in $W^{1,2}_0(\Omega)$, strongly in any $\elle q$ with $1 \leq q < \infty$ (recall the uniform bound \eqref{14;25}) and a.e. in  $\Omega$. 
Passing to the limit in estimate \eqref{14;25} gives us the $L^{\infty}-$bound for $u$.\\
Take now $\phi\in C^1_c(\Omega)$ as a test function in \eqref{15:56}. Property \eqref{bfb} implies that 
\[
\io A(x) \nabla u\nabla \phi=\lim_{n\to\infty}\io A(x)\nabla u_n\nabla \phi= \lim_{n\to\infty}\io \frac{(1-\psi u_n^{\gamma+r-1})}{(\frac1n+u_n)^{\gamma}}\phi =\io \frac{1}{u^{\gamma}}(1-v u^{\gamma+r-1})\phi.
\]
To show that indeed $u$ satisfies \eqref{22:03}, let us use the argument introduced in Theorem 2.2 of \cite{bd}. Given $\phi \in W^{1,2}_0(\Omega)$, we consider $\{\phi_n\}\subset C^1_c(\Omega)$ such that $\phi_n\to \phi$ strongly in $W^{1,2}_0(\Omega)$. Taking $\varphi=(\eps^2+|\phi_n-\phi_m|^2)^{\frac12}-\eps$ as a test function, we get
\[
0\le \io \frac{1}{u^{\gamma}}(1-v u^{\gamma+r-1})[(\eps^2+|\phi_n-\phi_m|^2)^{\frac12}-\eps]=\io A(x) \nabla u\nabla (\phi_n-\phi_m)\frac{\phi_n-\phi_m}{(\eps^2+|\phi_n-\phi_m|^2)^{\frac12}}
\]
\[
\le \beta \| u\|_{W^{1,2}_0(\Omega)}\| \phi_n-\phi_m\|_{W^{1,2}_0(\Omega)}.
\]
Using Fatou Lemma w.r.t. $\eps\to0$, it follows
\[
0\le \io \frac{1}{u^{\gamma}}(1-v u^{\gamma+r-1})\vert\phi_n-\phi_m\vert \le \beta \| u\|_{W^{1,2}_0(\Omega)}\| \phi_n-\phi_m\|_{W^{1,2}_0(\Omega)},
\]
namely the sequence $\frac{1}{u^{\gamma}}(1-v u^{\gamma+r-1})\phi_n$ is Cauchy in $\elle1$. That in turn implies
\[
\frac{\phi}{u^{\gamma}}\in\elle 1,\ \ \ \io A(x)\nabla u\nabla\phi+\io v u^{r-1}\phi =\io\frac{\phi}{u^{\gamma}} \ \ \ \forall \ \phi\in W^{1,2}_0(\Omega)
\]
This conclude the proof of existence.\\
To deal with uniqueness, let us assume that there exists $\tilde u\in W^{1,2}_0(\Omega)$ that satisfies both \eqref{22:03} and \eqref{bound}.
Then we would get
\[
\alpha\io |\nabla (u-\tilde u)_+|^2\le \io \left[\frac{(1- v  \ u^{\gamma+r-1} )}{(\frac1n+ u)^{\gamma}}-\frac{(1-v \ \tilde u^{\gamma+r-1})}{(\frac1n+u_n)^{\gamma}}\right](u-\tilde u)_+
\] 
\[
=\int_{w_n\ge u_n} \left[\frac{(1- v \  u^{\gamma+r-1} )}{(\frac1n+ u)^{\gamma}}-\frac{(1-v \ \tilde u^{\gamma+r-1})}{(\frac1n+\tilde u)^{\gamma}}\right](u-\tilde u)_+\le 0,
\]
 where the last inequality comes from the fact that, a.e. in $ \Omega$, the function
\[
s\to \frac{1-\psi(x)s^{\gamma+r-1}}{s^{\gamma}}
\]
is strictly decreasing for $0<s\le \|v\|_{\elle{\infty}}^{\frac1{1-r-\gamma}}$. Then $\tilde u\equiv u$ and the proof is complete.\\
\end{proof}

\noindent{\bf Proof of Theorem \ref{existence1}-Completed.}

\begin{proof}

Let us define a map $F:W^{1,2}_0(\Omega)\to W^{1,2}_0(\Omega)$ as follows: for any $z\in W^{1,2}_0(\Omega)$ let $v_z\in W^{1,2}_0(\Omega)\cap L^{\infty}(\Omega)$ be the unique solution of 
\[
\io M(x)\nabla v_z \nabla \phi=\io T_{\sigma}(|z|)^{r}\phi
\ \ \ \forall \ \phi\in W^{1,2}_0(\Omega),
\]
for some $\sigma>0$ to be properly chosen, $T_{\sigma}$ as in \eqref{25a}, and $F(z)=u_z\in W^{1,2}_0(\Omega)\cap L^{\infty}(\Omega)$ the unique solution to 
\be \label{helpbis}
 \io A(x)\nabla u_z\nabla\phi+\io{v_z}{ u^{r-1}_z} \phi=\io\frac{\phi}{u^{\gamma}_z}\ \ \ \forall \ \phi\in W^{1,2}_0(\Omega)
\ee
such that  $ \|u_z\|_{\elle{\infty}}\le \min\{\|v_z\|_{\elle{\infty}}^{\frac1{1-r-\gamma}},C_0\}$ and $\|u_z\|_{W^{1,2}_0(\Omega)}\le |\Omega|C_0^{1-\gamma}$. The existence of $v_z$ is classical, while the existence of $u_z$ is assured by Lemma \ref{primo0}. Moreover, thanks to classical elliptic estimates (see for instance \cite{bc}), we know that there exists a positive constant $K(\sigma)$ such that 
\be\label{14:08}
\|v_z\|_{\elle{\infty}}+\|v_z\|_{W^{1,2}_0(\Omega)}\le K(\sigma).
\ee
Thanks to the energy bound of $u_z$ (see \eqref{bound}), we deduce that $F(B(\tilde C))\subset B(\tilde C)$, where 
$$B(\tilde C)=\{z\in W^{1,2}_0(\Omega) \ : \ \|z\|_{ W^{1,2}_0(\Omega)}\le \tilde C = |\Omega|C_0^{1-\gamma}\}.$$
We now claim that the map $z\to F(z)=u_z$ is completely continuous. In order to prove it, let $\{z_n\}\subset W^{1,2}_0(\Omega)$ be a sequence weakly convergent to $z\in W^{1,2}_0(\Omega)$. It easily follows that  $v_{z_n}\to v_{z}$ strongly in $W^{1,2}_0(\Omega)$. On the other hand, notice that each $u_{z_n}$ solves
\be \label{eqn}
 \io A(x)\nabla u_{z_n}\nabla\phi+\io{v_{z_n}}{ u_{z_n}^{r-1}} \phi=\io\frac{\phi}{u_{z_n}^{\gamma}}\ \ \ \forall \ \phi\in W^{1,2}_0(\Omega)
\ee
and that (by definition of $u_{z_n}$)
\be\label{help}
 \|u_{z_n}\|_{\elle{\infty}}\le \min\{\|v_{u_n}\|_{\elle{\infty}}^{\frac1{1-r-\gamma}},C_0\},\ \ \ \ \|u_{z_n}\|_{W^{1,2}_0(\Omega)}\le |\Omega|C_0^{1-\gamma}.
\ee
Then, we have that, up to a subsequence, $u_{z_n}\to \xi$ weakly in $W^{1,2}_0(\Omega)$ and $a.e.$ in $\Omega$ for some $\xi \in W^{1,2}_0(\Omega)$. Now we want to prove that $\xi\equiv u_z$.  For any $0\le  \phi\in W^{1,2}_0(\Omega)$, it follows
\[
\io A(x)\nabla \xi \nabla \phi=\lim_{n\to\infty}\io A(x)\nabla u_{z_n} \nabla \phi=\lim_{n\to\infty} \io \frac{1}{u_{z_n}^{\gamma}}(1-v_{z_n}u_{z_n}^{\gamma+r-1})\phi
\]
\[
\ge \io \frac{1}{\xi^{\gamma}}(1-v_{z}\xi^{\gamma+r-1})\phi \ \ \ \forall  \ 0\le  \phi\in W^{1,2}_0(\Omega),
\]
where the last inequality follows from Fatous' Lemma. This chain of inequalities provides us with two pieces of information. The first one is that
\be\label{23:00}
0\le \io \frac{1}{\xi^{\gamma}}(1-v_{z}\xi^{\gamma+r-1})\phi<\infty \ \ \ \forall  \ 0\le  \phi\in W^{1,2}_0(\Omega).
\ee
The second one is that
\be\label{23:00bis}
\io A(x)\nabla \xi \nabla \phi\ge \io \frac{1}{\xi^{\gamma}}(1-v_{z}\xi^{\gamma+r-1})\phi\ \ \ \forall  \ 0\le  \phi\in W^{1,2}_0(\Omega).
\ee
To prove the reverse inequality, notice that any $u_{z_n}$ satisfies
\[
\io A(x)\nabla u_{z_n} \nabla \phi=\io  \frac{1}{u_{z_n}^{\gamma}}(1-v_{z_n}u_{z_n}^{\gamma+r-1})\phi\ge  \io \frac{1}{(1+u_{z_n})^{\gamma}}(1-C(\sigma) u_{z_n}^{\gamma+r-1})\phi 
\]
for all $0\leq \phi\in W^{1,2}_0(\Omega),$ where we have used the $L^{\infty}$-bound of \eqref{14:08} to obtain the last inequality. Then any $u_{z_n}$ satisfies the assumption of Lemma \ref{16:51} and the bound from below \eqref{bfb} holds true uniformly with respect to $n$. This implies that
\be\label{bfbbis}
\forall  \ \omega\subset\subset \Omega \ \ \exists \ c_{\omega}>0 \ : u_{z_n}\ge c_{\omega} \ \ \ \mbox{and} \ \   \ \xi\ge c_{\omega}.
\ee
Then $\phi \frac{u_{z_n}}{\xi}$, with $0\le  \phi\in C^1_c(\Omega)$, is well defined and an admissible test function for \eqref{eqn}. It results
\[
 \io A(x)\nabla u_{z_n}\nabla \phi\frac{u_{z_n}}{\xi}+\io A(x)\nabla u_{z_n}\nabla u_{z_n} \frac{\phi}{\xi}
\]
\[
=\io \frac{1}{u_{z_n}^{\gamma}}(1-v_{z_n}u_{z_n}^{\gamma+r-1})\phi\frac{u_{z_n}}{\xi}+\io A(x) \nabla u_{z_n}\nabla \xi\frac{u_{z_n}}{\xi^2}\phi.
\]
Thanks to \eqref{help}, the weak convergence in $W^{1,2}_0(\Omega)$, the a.e. convergence of $u_{z_n}$ to $u_{z}$,  \eqref{bfbbis}, we can take the liminf on the left hand side above and the limit on the right hand one w.r.t. $n\to\infty$, to obtain
\be\label{23:00tris}
\io A(x)\nabla \xi\nabla \phi \le \io \frac{1}{\xi^{\gamma}}(1-v_{z}\xi^{\gamma+r-1})\phi \ \ \ \forall \ 0\le  \phi\in C^1_c(\Omega).
\ee
From inequalities \eqref{23:00bis} and \eqref{23:00tris} and \eqref{23:00} it is standard to infer
\[
\io A(x)\nabla \xi\nabla \phi = \io \frac{1}{\xi^{\gamma}}(1-v_{z}\xi^{\gamma+r-1})\phi \ \ \ \forall \   \phi\in W^{1,2}_0(\Omega).
\]
Moreover, passing to the limit in the estimates \eqref{help}, we deduce at first that  $ \|\xi\|_{\elle{\infty}}\le \min\{\|v_z\|_{\elle{\infty}}^{\frac1{1-r-\gamma}},C_0\}$ and then, thanks to the uniqueness assured by Lemma \ref{primo0}, that $\xi$ has to coincide with $u_z$.\\ This proves that the map $z\to F(z)=u_z$ is completely continuous. Then the Schauder's Fixed Point Theorem assure us that there exist nonegative $(u,v)\in (W^{1,2}_0(\Omega)) \cap L^{\infty}(\Omega)^2$ such that
\[
\begin{cases}
\medskip
\medskip

\displaystyle
 \io A(x)\nabla u\nabla\phi+\io{v}{ u^{r-1}} \phi=\io\frac{\phi}{u^{\gamma}}\ \ \ &\forall \ \phi\in W^{1,2}_0(\Omega)\\
\medskip
\medskip

\displaystyle \io M(x)\nabla v \nabla \phi=\io T_{\sigma}(u)^{r}\phi
\ \ \ &\forall \ \phi\in W^{1,2}_0(\Omega).
\end{cases}
\] 
Since $u$ satisfies the $L^{\infty}$ estimate in  \eqref{help}, it is enough to take $\sigma> C_0$ (where $C_0$ is the absolute constant given by Lemma \ref{16:09}) from the beginning  so that $T_{\sigma}(u)=u$.  It follows from Lemma \ref{16:51} that $u>0$ in $\Omega$, whence implies by the second equation  that $v>0$ in $\Omega$.\\

In order to deal with teh uniqueness, let us assume $r\ge1$ and that there exists another solution $(\tilde u, \tilde v)\in (W^{1,2}_0(\Omega)) \cap L^{\infty}(\Omega)^2$. We follow the approach of \cite{cgh}. Assumin Taking $u-\tilde u$ as a test function in the equations solved by $u$ and $\tilde u$, we get
\be\label{10:47}
\alpha \io|\nabla (u-\tilde u)|^2+\io \big(vu^{r-1}-\tilde v\tilde u^{r-1}\big) (u-\tilde u)
\ee
\[
\le  \io A(x)\nabla (u-\tilde u)\nabla (u-\tilde u)+\io \big(vu^{r-1}-\tilde v\tilde u^{r-1}\big) (u-\tilde u)
\]
\[
\le\io\left(\frac{1}{u^{\gamma}}-\frac{1}{\tilde u^{\gamma}}\right)(u-\tilde u)\le 0.
\]
On the other side, taking $(v-\tilde v)$ as a test function in the equations solved by $v$ and $\tilde v$, it results
\[
\alpha \io|\nabla (v-\tilde v)|^2\le \io M(x)\nabla (v-\tilde v)\nabla (v-\tilde v)=\io(u^r-\tilde u^r)(v-\tilde v).
\]
To estimate the left hand side above with the second term in \eqref{10:47} notice that
\[
 \big(vu^{r-1}-\tilde v\tilde u^{r-1}\big) (u-\tilde u)=vu^{r}+\tilde v\tilde u^{r}-vu^{r-1}\tilde u-\tilde v\tilde u^{r-1} u
\]
\[
\ge_1 \frac 1r\left(vu^{r}+\tilde v\tilde u^{r}-v\tilde u^r-\tilde v u^r  \right)=\frac 1r(u^{r}-\tilde u^{r})(v-\tilde v ),
\]
where inequality $\le_1$ comes from the Young inequality $a^{r-1}b\le \frac{1-r}{r}a^r+\frac1rb^r$ applied to $u^{r-1}\tilde u$ and $\tilde u^{r-1} u$.
The previously obtained inequalities implies
\[
\alpha \io|\nabla (u-\tilde u)|^2+\frac{\alpha}{r} \io|\nabla (v-\tilde v)|^2\le 0,
\]
namely the solution is unique.\\

Finally, let us proof $ii)$.
Consider the  functional $J:\left(W^{1,2}_0(\Omega)\cap\elle{\infty}\right)^2\to \mathbb R$ defined by
\be\label{func}
J(w,z)=
\begin{cases}
\displaystyle
 \frac12\io \big(A(x)\nabla w\nabla w-\frac1{r} M(x)\nabla z\nabla z\big)
+\io \left( \frac{z^+ |w|^r}{r}
-\frac{\bigl(w^{+} \bigr)^{1-\gamma}}{1-\gamma}\right)  \  &\mbox{if finite}
\\
+\infty  \ &\mbox{otherwise}
\end{cases}
\ee
with $\gamma < 1$.

Note that if $(u,v)\in \left(W^{1,2}_0(\Omega)\cap\elle{\infty}\right)^2$ is a energy solution to \eqref{sing}, then $J(u,v)<+\infty$ and the chain of inequalities \eqref{16} are well defined. Let us prove then at first that
\be\label{05.07}
J(u,z)\le J(u,v) \ \ \ \mbox{ for any } \ \ \ z\in W^{1,2}_0(\Omega),
\ee
namely $v$ is a maximum for the functional $z\to J(u,z)$. Let us take $\frac 1r (v-z^+)$ as a test function in the second equation of \eqref{energysoleq} to get
$$
\frac 1r\io M(x)\nabla v \D(v-z^+)\pm\frac1{2r}\io M(x)\nabla z^+\nabla z^+ = \frac 1r\io | u|^r (v-z^+)\,.
$$
After simple manipulations, we get
\[
\frac{1}{2r}\io M(x)\nabla z^+\nabla z^+-\frac 1r\io z^+ | u|^r 
\]
\[
=\frac{1}{2r}\io M(x)\nabla v\nabla v-\frac 1r\io v | u|^r +\frac{1}{2r}\io M(x)\nabla (v-z^+)\nabla (v-z^+)
\]
\[
\ge \frac{1}{2r}\io M(x)\nabla v\nabla v-\frac 1r\io v | u|^r
\]
At this point, changing sing in the inequality above and adding the finite term
\[
\frac12 \io A(x)\nabla u\nabla u-\frac1{1-\gamma}\io u^{1-\gamma}
\]
we obtain \eqref{05.07} (recall that $u,v>0$  in $\Omega$). To conclude the proof of the Theorem we have to show that
\be\label{05.07bio}
J(u,v)\le J(w,v) \ \ \ \mbox{ for any } \ \ \ w\in W^{1,2}_0(\Omega),
\ee
namely the functional $w\to J(w,v)$ has a minimum at $u$. Let us assume that the right hand side above is finite, otherwise there is nothing to prove. Moreover we can also assume $w\ge0$, since 
\[
\frac12\io A(x)\nabla w^+\nabla w^+
+\io \left( \frac{1}{r}z^+ (w^+)^r
-\frac{1}{1-\gamma}\bigl(w^{+} \bigr)^{1-\gamma}\right)
\] 
\[
\le  \frac12\io A(x)\nabla w\nabla w
+\io \left( \frac{1}{r}z^+ |w|^r
-\frac{1}{1-\gamma}\bigl(w^{+} \bigr)^{1-\gamma}\right).
\]
Then \eqref{05.07bio} is equivalent to prove
\[
\io A(x)\nabla u\nabla u+\io Q_v(u)\le \io A(x)\nabla w\nabla w+\io Q_v(w), \ \ \ \mbox{for} \ \ 0\le w\in W^{1,2}_0(\Omega),
\]
where the real valued function $t\to Q_v(t)$ is defined as
\[
Q(t)=\frac1r\,v\,t^{r} - \frac{1}{1-\gamma}t^{1-\gamma}, \ \ \ \mbox{for} \ \ t>0.
\]
The convexity of $t\to Q_v(t)$ (due to  $\gamma >0$ and $r \geq 1$), namely
$$
Q(t)\geq Q(s) + Q'(s)(t-s),
$$
implies (with $t=w(x)$, $s=u(x)$) that 
\be\label{unouno}
\io\frac1r\,v \, w^{r}  
- \io\frac{1}{1-\gamma}w^{1-\gamma}
\ee
\[
\geq 
\io\frac1r\,v\,u^r 
-\io \frac{1}{1-\gamma}u^{1-\gamma}
 + \io\Big[ 
 v\,u^{r-1} -u^{-\gamma}
 \Big](w-u).
\]
Taking now $(u-w)$ as a test function in the first equation of \eqref{energysoleq} we deduce that
\be\label{duedue}
0=\io A(x)\nabla u\nabla(u-w)+\io \left(vu^{r-1}-\frac{1}{u^{\gamma}}\right)(u-w)
\ee
\[
=\frac12\io A(x)\nabla u\nabla u- \frac12\io A(x)\nabla w\nabla w+\frac12\io A(x)\nabla (u-w)\nabla (u-w)
\]
\[
+\io \left(vu^{r-1}-\frac{1}{u^{\gamma}}\right)(u-w).
\]
Putting together \eqref{unouno} and \eqref{duedue} and neglecting the positive term $\frac12\io A(x)\nabla (u-w)\nabla (u-w)$, we get the desired inequality and finish the proof of $ii)$ and of Theorem.

\end{proof}

Now we will allow bigger values for the parameter $\gamma$ and double singularities for a single equation paying the price of some regularity of $\partial \Omega$.

\begin{lemma}\label{08.7}
Let us assume \eqref{alpha}, \eqref{30.08}, $\gamma\in(1,3)$, $r>1-\gamma$ and $0\le v\in\elle{\infty}$.
Then there exists a unique $u\in W^{1,2}_0(\Omega)\cap\elle{\infty}$ satisfying
\begin{equation}\label{boundbis}
\left|\frac{\phi}{u^{\gamma}}\right|\in\elle 1 \ \ \ \  \|u\|_{\elle{\infty}}\le \min\{\|v\|_{\elle{\infty}}^{\frac1{1-r-\gamma}},C_0\},\ \ \ \ \|u\|_{W^{1,2}_0(\Omega)}\le  C_1,
\end{equation}
\be\label{09;05}
\ \ \mbox{and} \ \ \ \ \ \ \io A(x)\nabla u\nabla\phi+\io v u^{r-1}\phi =\io\frac{\phi}{u^{\gamma}} \ \ \ \forall \ \phi\in W^{1,2}_0(\Omega),
\ee
 where $C_0, C_1$ are positive real constants. Moreover
\be\label{18;25}
u\ge c\, d^{\tau}
\ee
where $\tau={2}/{(\gamma+1)}$ and $c$ is the one of \eqref{tenerifebis}.
\end{lemma}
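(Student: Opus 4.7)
The plan is to follow the strategy of Lemma \ref{primo0}, constructing a solution as the weak limit of the approximating sequence $\{u_n\}$ from Lemma \ref{16:09}, but the range $\gamma\in(1,3)$ forces me to replace the naive energy estimate with one that exploits Lemma \ref{regular} together with the Hardy inequality. Since $r>1-\gamma$ and $0\le v\in L^{\infty}(\Omega)$, Lemma \ref{16:09} provides $\{u_n\}\subset W^{1,2}_{0}(\Omega)\cap L^{\infty}(\Omega)$ solving \eqref{15:56} with the uniform bound $\|u_n\|_{L^{\infty}}\le \min\{\|v\|_{L^{\infty}}^{1/(1-r-\gamma)},C_0\}$, and the smoothness hypotheses \eqref{30.08} let me apply Lemma \ref{regular} to obtain
$$u_n(x)+\frac{1}{n}\ \ge\ c\,d(x)^{\tau},\qquad \tau=\frac{2}{\gamma+1},$$
with $c=c(\|v\|_{L^{\infty}})$ independent of $n$.

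Testing \eqref{15:56} against $u_n$ itself and discarding the nonnegative zero-order contribution,
$$\alpha\int_{\Omega}|\nabla u_n|^{2}\ \le\ \int_{\Omega}\frac{u_n}{(\frac{1}{n}+u_n)^{\gamma}}\ \le\ c^{-\gamma}\int_{\Omega}\frac{u_n}{d^{\tau\gamma}}.$$
Factorizing $u_n/d^{\tau\gamma}=(u_n/d)\,d^{1-\tau\gamma}$ and applying Cauchy--Schwarz followed by the Hardy inequality,
$$\int_{\Omega}\frac{u_n}{d^{\tau\gamma}}\ \le\ \Bigl(\int_{\Omega}\frac{u_n^{2}}{d^{2}}\Bigr)^{1/2}\Bigl(\int_{\Omega}d^{2(1-\tau\gamma)}\Bigr)^{1/2}\ \le\ C\,\mathcal{H}^{-1/2}\,\|\nabla u_n\|_{2},$$
where the geometric integral is finite precisely because $2(1-\tau\gamma)=2(1-\gamma)/(\gamma+1)>-1$, which is equivalent to $\gamma<3$. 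Absorbing the resulting linear factor on the left yields the uniform energy bound $\|u_n\|_{W^{1,2}_0(\Omega)}\le C_{1}$.

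Extracting a subsequence, $u_n\rightharpoonup u$ in $W^{1,2}_0(\Omega)$, strongly in every $L^{q}(\Omega)$ with $q<\infty$, and a.e.\ in $\Omega$, so the $L^{\infty}$- and energy-bounds in \eqref{boundbis} transfer to $u$, and Lemma \ref{regular} passes to the limit to give \eqref{18;25}. To recover \eqref{09;05} I first test against $\phi\in C^{1}_{c}(\Omega)$: on $\mathrm{supp}\,\phi$ the estimate $\frac{1}{n}+u_n\ge c\,d^{\tau}$ provides a uniform $L^{\infty}$-control of $(\frac{1}{n}+u_n)^{-\gamma}(1-vu_n^{\gamma+r-1})$, so Lebesgue's dominated convergence identifies the limit equation on $C^{1}_{c}(\Omega)$. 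The extension to $\phi\in W^{1,2}_0(\Omega)$ proceeds as in Lemma \ref{primo0}: approximating $\phi$ by $\{\phi_n\}\subset C^{1}_{c}(\Omega)$ in $W^{1,2}_0(\Omega)$, testing with $(\eps^{2}+|\phi_n-\phi_m|^{2})^{1/2}-\eps$ and sending $\eps\to 0$ via Fatou yields that $\phi_n/u^{\gamma}$ is Cauchy in $L^{1}(\Omega)$, hence $\phi/u^{\gamma}\in L^{1}(\Omega)$ and \eqref{09;05} holds for every $\phi\in W^{1,2}_0(\Omega)$.

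Uniqueness mirrors the argument in Lemma \ref{primo0}: if $\tilde u$ is another solution in the class described by \eqref{boundbis}, testing both equations with $(u-\tilde u)_{+}$ and subtracting gives
$$\alpha\int_{\Omega}|\nabla(u-\tilde u)_{+}|^{2}\ \le\ \int_{\Omega}\Bigl[\frac{1-v\,u^{\gamma+r-1}}{u^{\gamma}}-\frac{1-v\,\tilde u^{\gamma+r-1}}{\tilde u^{\gamma}}\Bigr](u-\tilde u)_{+}\ \le\ 0,$$
since $s\mapsto(1-v(x)s^{\gamma+r-1})/s^{\gamma}$ is strictly decreasing on $(0,\|v\|_{L^{\infty}}^{1/(1-r-\gamma)}]$ (here the hypothesis $r>1-\gamma$ enters) and both solutions lie below this threshold; swapping the roles of $u$ and $\tilde u$ gives $u=\tilde u$. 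The main obstacle is the uniform energy estimate: it is what dictates both the smoothness hypothesis \eqref{30.08} (needed for the pointwise lower bound of Lemma \ref{regular}) and the threshold $\gamma<3$ (needed so that the geometric integral above is finite).
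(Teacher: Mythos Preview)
Your proof is correct and follows essentially the same approach as the paper: you use the approximating sequence from Lemma~\ref{16:09}, derive the uniform $W^{1,2}_0$-bound by combining the pointwise lower estimate of Lemma~\ref{regular} with Cauchy--Schwarz and the Hardy inequality (observing that the geometric integral $\int_\Omega d^{2(1-\tau\gamma)}$ is finite precisely when $\gamma<3$), and then pass to the limit and handle uniqueness exactly as in Lemma~\ref{primo0}. The paper's own proof does the same, omitting the limit-passage and uniqueness details by referring back to Lemma~\ref{primo0}, whereas you spell them out.
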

\begin{proof}
Let us prove at first that the sequence of approximating solutions $\{u_n\}$ given by Lemma \ref{16:09} is bounded in $W^{1,2}_0(\Omega)$ even for the considered range of values for $\gamma$. Notice that, since $\gamma>1$, we cannot simply obtain estimate \eqref{tenerife} as in Lemma \ref{primo0}. Instead we are going to take advantage of the quantitative estimate from below provided by Lemma \eqref{regular} (here we need the regularity assumption on $\partial\Omega$). 
Taking $ u_n$ as a test function in \eqref{15:56}, it results that
\begin{equation}\label{hhardy}
\alpha\io|\nabla  u_n|^2 \le \io\frac{ u_n}{(\frac1n+ u_n)^\gamma}\le_1 \tilde C \io\frac{ u_n}{d^{\tau\gamma}}
\end{equation}
\[
\le\tilde C_2\left(\io\frac{ u_n^2}{d(x)^2}\right)^{\frac12}\left(\io d(x)^{2-2\tau\gamma}\right)^{\frac12},
\]
where we have used inequality \eqref{tenerifebis} in $\le_1$ and H\"older inequality in $\le_2$.  Let us recall that, thanks to the smoothness of $\partial \Omega$ and the fact that  $2-2\tau\gamma>-1$ (since $\gamma<3$), we have that
\[
\io d(x)^{2-2\tau\gamma}\le D \ \ \ \mbox{for some constant} \ \ \ D>0.
\]
Moreover, using again our regularity assumption on $\partial\Omega$, Hardy inequality assures
\[
\io\frac{ u_n^2}{d(x)^2}\le \mathcal{H} \io|\nabla u_n|^2
\]
for some absolute constant $\mathcal{H}$  (see for instance \cite{bm}) .
Plugging the last two pieces of information in \eqref{hhardy} we get
\[
\alpha\io|\nabla  u_n|^2 \le D \mathcal{H}.
\]
Then we deduce that there exists $u\in W^{1,2}_0(\Omega)\cap\elle{\infty}$ such that, up to a subsequence, $u_n \to u$ weakly in $W^{1,2}_0(\Omega)$, strongly in any $\elle q$ with $1\leq q <2^*$ 
and a.e. in  $\Omega$. To prove that such function is the unique that satisfies \eqref{boundbis} and \eqref{09;05}, we follow by the same argument of Lemma \ref{primo0} and we omit the details. Finally \eqref{18;25} is obtained simply taking the limit as $n\to\infty$ in \eqref{tenerifebis}.\\
\end{proof}

\noindent{\bf Proof of Theorem \ref{30.08x}-completed.}
\begin{proof}
The proof follows exactly the same argument as in the proof of Theorem \ref{existence1}. The only difference is that we use the estimates provided by Lemma \ref{08.7} instead of Lemma \ref{primo0}

\end{proof}

\section{Outside the energy space}

In this section, we consider stronger singularities that prevents the solution to belong to the energy space and so we consider a function $u\in  W_{loc}^{1,2}(\Omega)$ being zero on the boundary of $\Omega$ whenever $(u-\epsilon)^+ \in W_{0}^{1,2}(\Omega)$ for any $\epsilon>0$. 

\begin{lemma}\label{14;45}
Let $\Omega$ be a bounded open set of $\mathbb{R}^N$, let us assume \eqref{alpha}, $\gamma>0$ and $r>1-\gamma$.
For any $n\in \mathbb N$, there exist a couple of nonegative functions $(u_n,\psi_n)\in \left(W^{1,2}_0(\Omega)\cap\elle{\infty}\right)^2$ satisfying 
\begin{equation}
\label{4312}
\begin{cases} 
\medskip
\medskip

\displaystyle \io A(x)\nabla u_n\nabla\phi+\io v_n u_n^{r-1} \phi=\io\frac{\phi}{(u_n+1/n)^{\gamma}}, & \forall \ \phi\in W^{1,2}_0(\Omega) \\

\medskip

\displaystyle  \io M(x)\nabla v_n \nabla \phi=\io u_n^r\phi & \forall \ \psi\in W^{1,2}_0(\Omega).
\end{cases}
\end{equation}
Moreover there exists $\tilde C=\tilde C(\alpha, N, \Omega, \gamma, r)$ such that
\be \label{uuno}
\|u_n\|_{\elle{\infty}}+\|v_n\|_{\elle{\infty}}\le \tilde C
\ee
\end{lemma}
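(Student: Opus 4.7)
The plan is, for each fixed $n\in\mathbb{N}$, to produce the regularized couple $(u_n,v_n)$ via a Schauder fixed-point argument patterned on the proof of Theorem \ref{existence1}, and then to derive the uniform bound \eqref{uuno} by a Stampacchia-type truncation whose constant does not see $n$. Crucially, the presence of the regularizer $1/n$ in the denominator makes the right-hand side of the first equation bounded by $n^{\gamma}$, which dispenses with the positive-lower-bound machinery used in Section 3 and allows a more direct treatment.

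Concretely, I would define a map $F:W^{1,2}_0(\Omega)\to W^{1,2}_0(\Omega)$ by $F(z)=u_z$, where first $v_z\in W^{1,2}_0(\Omega)\cap\elle{\infty}$ is the unique weak solution of
\[
\io M(x)\nabla v_z\nabla \phi=\io T_{\sigma}(|z|)^r\phi\qquad \forall\phi\in W^{1,2}_0(\Omega),
\]
with $\sigma>0$ to be chosen, and then $u_z\in W^{1,2}_0(\Omega)\cap\elle{\infty}$ is the unique nonnegative solution of
\[
\io A(x)\nabla u_z\nabla\phi+\io v_z u_z^{r-1}\phi=\io\frac{\phi}{(u_z+1/n)^{\gamma}}\qquad \forall\phi\in W^{1,2}_0(\Omega).
\]
Existence of $v_z$ together with the classical bound $\|v_z\|_{\elle{\infty}}\le K(\sigma)$ is standard, while existence and uniqueness of $u_z$ is obtained through an inner Schauder argument exactly as in Lemma \ref{16:09}, exploiting that the source is bounded by $n^{\gamma}$. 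Complete continuity of $F$ then follows from the usual chain: weak convergence $z_m\to z$ in $W^{1,2}_0(\Omega)$ gives strong convergence $v_{z_m}\to v_z$ by classical linear theory, the sequence $\{u_{z_m}\}$ is uniformly bounded in $W^{1,2}_0(\Omega)\cap\elle{\infty}$, and Lebesgue's dominated convergence (thanks to the uniform $L^{\infty}$-majorant $n^{\gamma}$ of the singular term) identifies the weak limit as $u_z$. Schauder's theorem then yields a fixed point $u_n=F(u_n)$; choosing $\sigma$ larger than the constant $C_0$ of Lemma \ref{16:09} guarantees $T_{\sigma}(u_n)=u_n$ and thus that the couple $(u_n,v_n)$ solves \eqref{4312}.

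The heart of the lemma is the uniform estimate \eqref{uuno}. For $u_n$, I would test the first equation in \eqref{4312} with $G_k(u_n)$, $k\ge 1$; since $v_n,u_n\ge 0$ the potential term is nonnegative and can be discarded, so that
\[
\alpha\io|\nabla G_k(u_n)|^2\le \io\frac{G_k(u_n)}{(u_n+1/n)^{\gamma}}\le \io G_k(u_n),
\]
because $u_n\ge k\ge 1$ on $\{u_n>k\}$. Th\'eor\`eme 4.2 of \cite{S66} then yields $\|u_n\|_{\elle{\infty}}\le C_0(\alpha,N,|\Omega|)$ independently of $n$. Once this is settled, the right-hand side $u_n^r$ of the Maxwell equation is uniformly bounded in $\elle{\infty}$, so the classical $L^{\infty}$-estimate (e.g.\ Theorem 6.6 of \cite{bc}) gives $\|v_n\|_{\elle{\infty}}\le C_1$ and \eqref{uuno} follows. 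The main subtlety is organizational rather than analytical, namely coupling the truncation parameter $\sigma$ in the Maxwell equation with the Stampacchia constant $C_0$ consistently so that the truncation is transparent at the fixed point; the actual a priori estimates are rather short.
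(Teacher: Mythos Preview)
Your proposal is correct and follows essentially the same route as the paper: a Schauder fixed-point argument built on the map $z\mapsto u_z$ via $v_z$ with a truncation $T_\sigma$, existence of $u_z$ from Lemma~\ref{16:09}, removal of the truncation once $\sigma>C_0$, and the uniform $L^\infty$-bound for $u_n$ from the $G_k$-Stampacchia estimate already contained in Lemma~\ref{16:09}, which then yields the bound on $v_n$. You are in fact slightly more explicit than the paper in pointing out that the $1/n$-regularizer caps the singular source by $n^\gamma$, so that complete continuity here is a straightforward dominated-convergence argument and does not require the more delicate lower-bound machinery used in the proof of Theorem~\ref{existence1}.
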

\begin{proof}
The proof of the Lemma is simplified version of Theorem \ref{existence1} , since for any $n\in\mathbb N$ fixed the left hand side of the first equation of \eqref{4312} is not singular. We just give a quick sketch for the convenience of the reader. The  strategy is to apply the Schauder fixed point Theorem. For it, consider the map $F:W^{1,2}_0(\Omega)\to W^{1,2}_0(\Omega)$ defined as follows: for any $z\in W^{1,2}_0(\Omega)$ let $v_z\in W^{1,2}_0(\Omega)$ be the unique solution of 
\[
\io M(x)\nabla v_z \nabla \phi=\io T_{\sigma}(|z|)^{r}\phi
\ \ \ \forall \ \phi\in W^{1,2}_0(\Omega),
\]
for some $\sigma>0$ to be chosen, and $F(z)=u_z\in W^{1,2}_0(\Omega)$ the unique solution to 
\be \label{helpbis}
 \io A(x)\nabla u_z\nabla\phi+\io{v_z}{ u^{r-1}_z} \phi=\io\frac{\phi}{(u_z+1/n)^{\gamma}}\ \ \ \forall \ \phi\in W^{1,2}_0(\Omega)
\ee
given by Lemma \ref{15:56}. We recall that $v_z$ and $u_z$ satisfy \eqref{14:08} and \eqref{14;25} respectively.
As in the proof of Theorem \ref{existence1}, we deduce that the map $z\to F(z)=u_z$ is invariant for some ball $B\subset W^{1,2}_0(\Omega)$, whose radius does not depend on $\sigma$, and it is completely continuous. Applying Schauder fixed point Theorem and taking $\sigma$ large enough we obtain the existence of the searched solutions. To conclude, notice that Lemma \ref{16:09} implies that $|u_n|\le C_0$ (see estimate \eqref{14;25}) and then estimate \eqref{uuno} follows easily.
\end{proof}

\noindent{\bf Proof of Theorem \ref{teo15}-completed.}

\begin{proof}
Let $\{(u_n,v_n)\}\subset  \left(W^{1,2}_0(\Omega)\cap\elle{\infty}\right)^2$ be the sequence of functions provided by Lemma \ref{14;45}.
Using $v_n$ as a test function in the second equation of \eqref{4312}, we get 
\[
\displaystyle \alpha \io |\nabla v_n|^2 \leq \io u_n^rv_n \leq  C_0^r\io v_n,
\]
that easily implies that $\{v_n\}$ is bounded in $W_{0}^{1,2}(\Omega)$. Then there exists $ v \in W_{0}^{1,2}(\Omega)$ such that
$$v_n \rightharpoonup v ~\mbox{in }W_{0}^{1,2}(\Omega),~~v_n \to  v ~\mbox{in }L^2(\Omega),~\mbox{and }v_n(x) \to v (x)~\mbox{a.e. in }\Omega.$$
To deal with the second equation in \eqref{4312}, let us notice that thanks to \eqref{uuno}, we can apply Lemma \ref{16:51} with $M=\tilde C$ to deduce that 
\be\label{duue}
\forall \ \omega\subset\subset \Omega \ \ \exists \ c_{\omega, M} \ : \ u_n\ge  c_{\omega, M} \ \ \mbox{in} \ \ \omega   \ \ \ \forall n>0.
\ee
Take now any open set $U \subset \subset \Omega$ and $\xi \in C_c^{\infty}(\Omega)$ such that  $  0 \leq \xi \leq 1$ and $\xi = 1$ in $U$. By using $u_{n}\xi^2$ as a test function in the first equation in \eqref{4312}, we obtain
\begin{eqnarray}\label{11}
\displaystyle \int_{\Omega} |\nabla u_{n}|^2 \xi^2dx + 2\displaystyle\int_{\Omega} \nabla u_{n} \nabla \xi (u_{n} \xi )dx \leq  \displaystyle\int_{\Omega} \frac{u_n}{( u_{n} +1/n)^{\gamma}} \xi^2 dx.
\end{eqnarray} 
By using \eqref{uuno} and  Young's inequality, we obtain that
\begin{eqnarray}\label{21}
\displaystyle\int_{\Omega} \nabla u_{n} \nabla \xi (u_{n}\xi )dx & \leq & \displaystyle\int_{\Omega} |\nabla u_{n}||\nabla \xi| u_{n}\xi dx  \nonumber \\
& \leq & \theta \displaystyle\int_{\Omega} \left(|\nabla u_{n}|\xi\right)^{2}dx + C(\theta) \displaystyle\int_{\Omega} u_{n}^2 |\nabla \xi|^2 dx \nonumber \\
& \leq & \theta\displaystyle\int_{\Omega} \xi^2 |\nabla u_{n}|^2 dx + C(\theta),  
\end{eqnarray}
where $C(\theta)$ is a cumulative positive constant with $\theta>0$. Then \eqref{11} becomes 
$$  \displaystyle\int_{U} |\nabla u_{n}|^2 dx \leq \displaystyle\int_{\Omega} |\nabla u_{n}|^2 \xi^2dx \leq  C(\theta).$$
This implies that $\{u_{n}\}$ is bounded in $W_{{\mathrm{loc}}}^{1,2}(\Omega)$. So, there exists $u \in W_{\mathrm{loc}}^{1,2}(\Omega)\cap L^{\infty}(\Omega)$ such that
\begin{equation}
\label{121}
 \left\{
\begin{array}{l}
 u_{n} \rightharpoonup  u \ \ \mbox{in}  \  \ W^{1,2}(U) \\
 u_{n} \rightarrow u \ \ \ \mbox{in}  \ \ L^{2}(U)\\
u_{n}(x) \rightarrow u(x) \ \ \mbox{almost everywhere in} \ \Omega ,

\end{array}
\right.
\end{equation}
for each  $U \subset \subset \Omega$ given. In particular, 
\begin{equation}
\label{49}
u_n^r \to u^r~\mbox{in} ~ L^2(\Omega),
\end{equation}
Recalling \eqref{uuno}, we can take advantage of the Lebesgue Theorem to pass to the limit in the equation solved by $v_n$ to obtain that
\[
\displaystyle  \io\nabla  v  \nabla \phi=\io u^r\phi,\ \ \forall \ \phi\in W^{1,2}_0(\Omega).
\]
Finally, thanks to \eqref{duue} and \eqref{121}, we can pass to the limit in the first equation of \eqref{4312} to conclude that 
$$\displaystyle \io\nabla u\nabla\phi+\io\varphi u^{r-1} \phi=\io\frac{\phi}{u^{\gamma}} ~~\mbox{for any }\phi \in C^{\infty}_{c}(\Omega).$$

To finish the proof, we just need to prove that $(u-\epsilon)^+ \in W^{1,2}_0(\Omega)$ for any $\epsilon>0$ given. Since $u_n$ satisfies \eqref{uuno} and \eqref{duue}, we can redo the above argument to infer that $(u_n-\epsilon)^+$ is bounded in $W_0^{1,2}(\Omega)$ for each $\epsilon>0$ so that $(u_n-\epsilon)^+ $ weakly converges to some $w \in W_0^{1,2}(\Omega)$. Since we already know from \eqref{121} that $u_n(x) \to u(x)$ almost everywhere in $\Omega$, we must have $(u-\epsilon)^+=w \in W_0^{1,2}(\Omega)$. This ends the proof.\\
\end{proof}

\noindent{\bf Acknowledgments.}  
The first author was supported by PNPD/CAPES-UnB-Brazil, Grant 88887.363582/2019-00, and by the Austrian Science Fund (FWF) projects F65 and P32788. The third author was supported by CNPq/Brazil with grant 311562/2020 - 5.

\end{document}